\documentclass[12pt]{article}
\usepackage{indentfirst}
\usepackage{geometry,graphicx,xcolor,color,enumerate,verbatim}
\usepackage{amssymb,amsmath,amsthm,mathtools,bm}                           
\usepackage{mathpazo}
\usepackage[nofontspec]{newpxtext}

\definecolor{winered}{rgb}{0.5,0,0}
\definecolor{PaleTurquoise}{RGB}{102 ,139, 139}
\definecolor{structurecolor}{RGB}{122,122,142}
\definecolor{main}{rgb}{0.5,0,0}
\definecolor{second}{RGB}{115,45,2}
\definecolor{third}{RGB}{0,80,80}
\usepackage[colorlinks,linkcolor = winered, citecolor=winered]{hyperref}           

\usepackage{newclude,ulem}
\usepackage{amsthm}
\newtheoremstyle{defstyle}{10pt}{20pt}{ \itshape }{-3pt}{\bfseries\color{main}}{}{0.1em}{ \underline{\thmname{#1} \thmnumber{#2}. \thmnote{(#3)}} }
\newtheoremstyle{thmstyle}{10pt}{20pt}{ \itshape }{-3pt}{\bfseries\color{second}}{}{0.1em}{ \uline{\thmname{#1} \thmnumber{#2}. \thmnote{(#3)}} }
\newtheoremstyle{prostyle}{10pt}{20pt}{ \itshape }{-3pt}{\bfseries\color{third}}{}{0.1em}{ \uline{\thmname{#1} \thmnumber{#2}. \thmnote{(#3)}}  }

\theoremstyle{thmstyle}
    \newtheorem{theorem}{Theorem}[section]
\theoremstyle{defstyle} 
    
    \newtheorem{lemma}[theorem]{Lemma}
    \newtheorem{corollary}[theorem]{Corollary}
\theoremstyle{prostyle}
    \newtheorem{proposition}[theorem]{Proposition}
    
    \newtheorem{remark}[theorem]{Remark}

\renewenvironment{proof}[1][Proof.]{\par{\noindent \uline{\bfseries #1}} \; }{\qed\par}

\numberwithin{equation}{section}

\title{Uniform heat kernel asymptotics for the Grushin operator}
\author{ Yimeng Chen, Hong-Quan Li, Jun-Cheng Tang, Jia-Yu Yang }
\date{\today}

\def\R{\mathbb{R}}
\def\C{\mathbb{C}}

\def\wtreffun{\widetilde{\phi}}
\def\reffun{\phi}

\def\ff{\mathrm{f}}
\def\wtff{\widetilde{\ff}}
\def\ra{R,a}
\def\rr{\mathrm{r}}
\def\bV{\mathbf{V}}
\def\bVt{\bV_2}
\def\ggg{g,g'}
\def\yg{\mathbf{y}_{\ggg}}
\def\Sg{\mathbf{S}_{\ggg}}
\def\Wg{\mathbf{W}_{\ggg}(\xi)}
\def\F{\mathbf{F}}
\def\d{\,d}

\def\Ag{\mathbb{A}_{\ggg}(\xi)}
\def\hess{\mathrm{Hess}}
\def\t{^\mathrm{T}}
\def\I{\mathbb{I}}
\def\xg{\mathbf{x}_{\ggg}}

\def\vt{\vartheta}
\def\cpi{\mathbf{C}(n, n')}
\def\tcpi{\tilde{\mathbf{C}}(n, n')}

\def\rgn{\blacklozenge}
\def\O{\mathcal{O}}
\def\rg{\rho_{\ggg}(\xi)}
\def\Ic{\mathcal{I}}
\def\Yy{\mathbf{Y}}
\def\BR{\mathbf{R}}
\def\ss{\mathrm{s}}

\def\gx{\rho^{*}(\ggg;\xi)}
\def\adr{\lozenge}

\def\pa{\psi_a}
\def\ma{\mu_a}

\def\AAg{\widetilde{\mathbf{A}}_{\ggg}(\eta)}
\def\EEg{\widetilde{\mathbf{E}}_{\ggg}(\eta)}
\def\EgE{\mathbf{E}_{\ggg}(\xi)}
\def\Lgg{\Lambda_{\ggg}(\xi)}
\def\vpo{\varpi_0}
\def\DD{\mathrm{D}}
\def\sV{\tilde{\mathbf{V}}}
\def\jj{\mathbf{J}}
\def\wtjj{{\widetilde{\jj}}}

\DeclareMathOperator{\csch}{csch}

\begin{document}
    \maketitle

\begin{abstract}
Adopting the powerful methods introduced in \cite{li2021carnotcaratheodory, LZ2025},  we investigate the asymptotic behaviour at infinity for the heat kernel associated with the Grushin operator $\Delta_G = \Delta_x + |x|^2 \Delta_u$ on $ \R_x ^n \times \R_u ^{n'} $ for all $ n , \, n' \ge 1 $. We further establish sharp bounds for its spatial derivatives. As a by-product, precise estimates and small-time asymptotic behaviour for the Grushin heat kernel will be provided, as explicit as one can possibly hope for.
\end{abstract}

\section{Introduction}

On $\R_x^n \times \R^{n'}_u$ endowed with Lebesgue measure, the Grushin operator, initially introduced by V. V. Grushin (cf. \cite{G70}), is defined as
    $$ \Delta_G = \Delta_x + |x|^2 \Delta_u = \sum_{j = 1}^n \left( X_j^2 + \sum _{ k = 1 }^{n'} U_{ j k }^2 \right), \quad \mbox{ where } X _ j \coloneqq \frac{ \partial }{ \partial x_j}, \, U_{ j k } \coloneq x_j \, \frac{ \partial }{ \partial u_k}, $$
and $| \cdot |$ denotes the usual Euclidean norm. The operator loses ellipticity along $ \{ x = 0 \} $, but remains hypoelliptic. Moreover, despite the absence of an underlying group structure, it is a toy model of nil-manifolds and admits a natural anisotropic dilation structure. During the past few decades, there has been considerable work on the Grushin operator (and more general frameworks) regarding various problems, such as weighted Sobolev-Poincar\'e inequalities, geometric properties, Riesz transforms and spectral multipliers, etc. 

Let us denote by  $d ( \ggg) $ the Carnot–Carath\'eodory distance associated with $\Delta_G$, where $\ggg \in \R_x ^n \times \R _u ^{n'} $. Our main concern in this paper is the uniform asymptotic behaviour at infinity (i.e. $d (\ggg)^2 / h \to + \infty$) of the Grushin heat kernel $p_h (\ggg) \,( h > 0 )$, which is the fundamental solution of $\frac{\partial}{\partial h} - \Delta_G$. As corollaries, we obtain its small-time asymptotics as well as the uniform upper and lower bounds, as explicit as one can possibly hope for. In our specific context, several general results available in the literature yield useful, though non-sharp, estimates. For example, the small-time asymptotic expansions of the heat kernel strictly off the cut locus which G. Ben Arous derived in \cite{BA88} provide a preliminary answer, but the estimates are far from sharp in our setting. A. Sikora's result \cite{Sikora04} yields an upper bound for all $ \ggg \in \R_x ^n \times \R _u ^{ n '} $ and $ h > 0 $: 
\begin{equation} \label{ineq_upbd}
    p_h ( \ggg) \le C \, \left( 1 + \frac{ d (\ggg) }{ \sqrt{ h } } \right)^{-1} V _ g\left( \frac{ h }{ d (\ggg) + \sqrt{ h } } \right) ^ { - \frac12 } \, V _ {g'} \left( \frac{ h }{ d(\ggg) + \sqrt{ h } } \right) ^ { - \frac12 } e^ { - \frac{ d (\ggg) ^2 }{ 4 \, h } } ,
\end{equation}
where $C > 0$ is a universal constant, $V_g(r)$ denotes the volume of the ball of center $g = ( x , u )$ and radius $r$, and satisfies (cf. e.g. \cite[Proposition~5.1]{RS08}, also \cite[Theorem~2.3]{Franchi91} or \cite[Proposition~2.2]{FGW94})
\begin{equation} \label{12f}
    C^{-1} \le \frac{V_g (r)}{r^{ n + n ' } \left( r ^{ n' } + | x | ^{ n ' } \right)} \le C, \qquad \forall g = (x, u) \in \R_x ^n \times \R _u ^{ n '}, \  r > 0.
\end{equation}

Furthermore, \cite[Theorem~4.11]{CS08} leads to the following gradient estimate:
\begin{equation}\label{daoshu_up}
    \left| \nabla_G \, p_h (\ggg) \right| \le C \, \frac{ ( 1 + d(\ggg)^2 / h ) ^{ 3 ( n + 2 n ' ) +1 } }{ \sqrt{ h } \, V_{g'} ( \sqrt{h} ) } \, e^{ - \frac{ d (\ggg) ^2 }{ 4 \, h } } , \quad \forall \, \ggg \in \R_x^n \times \R_u ^{n'} \mbox{ and } h>0,
\end{equation}
where $\nabla_G \coloneqq \Big( ( X_j )_{1 \le j \le n} , \, ( U_{ j k } )_{1 \le j \le n , \, 1 \le k \le n'} \Big)$. In particular, since the Grushin operator is a nil-manifold, we further have the following estimates for $p_h ( \ggg)$ for any $h > 0 , \, \ggg \in \R_x^n \times \R_u ^{n'}$ and $0 < \epsilon < 4$ as a result of \cite[Th\'eor\`emes 2.12 et 2.14]{M98}: 
\begin{gather}
    p_h ( \ggg ) \ge C_\epsilon \, \frac{ \exp \left( - \frac{ d (\ggg) ^2 }{ ( 4 - \epsilon ) \, h} \right) }{  V _g ( \sqrt{h} ) ^ \frac12 \, V_{g'} ( \sqrt{ h } ) ^ \frac12 } , \label{ineq_lowerbd} \\
    \left| \nabla_G^k \, p_h (\ggg)  \right| \le C_{ \epsilon , \, k } \, h^{ - \frac{k}{ 2 } } \, \frac{ \exp \left( - \frac{ d (\ggg) ^2 }{ ( 4 + \epsilon ) \, h} \right) }{  V _g ( \sqrt{h} ) ^ \frac12 \, V_{g'} ( \sqrt{ h } ) ^ \frac12 }, \quad \mbox{ where } k = 1, 2, \ldots. \label{ineq_derivupbd}
\end{gather} 

Thereafter, significant progress was made by \cite{Li12}, where uniform asymptotics at infinity for the heat kernel, together with precise pointwise estimates as well as small-time asymptotics, were established in the case $ n \ge 3 $ and $n ' = 1 $. A subsequent work \cite{LZ19} removed the additional condition of $n \ge 3$ and showed that \cite[Theorems 1.2 and 1.3]{Li12} are valid for $n \ge 1$. 

Recall that the Grushin operator can be considered as a nil-manifold related to the  Heisenberg--Reiter group $\mathbb{H}_{n, n'}$ endowed with its canonical sub-Laplacian $\Delta_{n, n'}$; and the transference technique has often proved effective for some analytic problems. See for instance \cite{MS12} for spectral multipliers. However, this technique appears to be of limited use for precise heat kernel estimates, even in the simplest Grushin case \(n = n' = 1\). Furthermore we point out that $(\mathbb{H}_{n, 1}, \Delta_{n, 1})$ is exactly the famous Heisenberg group of dimension $2 n + 1$. For $n' \ge 2$, using \cite[Proposition~2.2]{li2021carnotcaratheodory}, it is easy to show that $(\mathbb{H}_{n, n'}, \Delta_{n, n'})$ admits the GM-property introduced therein. Hence the basic geometric properties can be readily deduced from \cite{li2021carnotcaratheodory}, such as the explicit expression for the related control distance. Nevertheless it seems to be intractable for deriving precise heat kernel bounds for $(\mathbb{H}_{n, n'}, \Delta_{n, n'})$ in the case where $n, n' \ge 2$,
as explicit as one can possibly hope for. We point out that sharp heat kernel estimates will be provided in \cite{LTY26} for the remaining case $n = 1$ and $n' \ge 2$.

Instead, our approach is based on \cite{li2021carnotcaratheodory} and \cite{LZ2025}, and remains applicable despite the absence of the group structure. It may also be viewed as a refinement and continuation of \cite{Li12}. Recall that the underlying idea of \cite{li2021carnotcaratheodory} is operator convexity. From the methodological point of view, there is no essential difference between the Grushin operator and H-type groups. This partly explains why the resulting estimates are closely parallel to those obtained for (generalized) H-type groups.

Since \eqref{ineq_upbd} and \eqref{ineq_lowerbd} already establish the uniform estimates for $p_h(\ggg)$ with $\frac{d (\ggg)^2}{h} $ bounded from above, the present paper mainly focuses on the uniform asymptotic behaviour at infinity for \(p_h (\ggg)\) in the general case $n , n' \ge 1$; see Theorems \ref{Thm_est_of_p_epsl_gtr_1} and \ref{Thm_est_of_K}. As consequences, we arrive at Theorem \ref{thm_main} and Section \ref{Sn6}, which provide, respectively, sharp uniform bounds and small-time asymptotics for \(p_h (\ggg)\). We also obtain sharp upper estimates for the spatial derivatives of $p_h (\ggg)$, stated in Theorem \ref{thm_deriv}.
   
The remainder of this paper is organized as follows. In Section \ref{Sn2}, we introduce the necessary notation and state our results regarding sharp uniform estimates for the Grushin heat kernel and its spatial derivatives. Section \ref{Sn3} treats the asymptotic behaviour for the heat kernel in the simple case. Sections \ref{Sn4} and \ref{Sn5} are devoted to the difficult case, where we establish the uniform asymptotic formula for \( p_1 (\ggg)\) and derive the sharp two-sided bounds. In Section \ref{Sn6}, we deduce the small-time asymptotics for $ p_h ( \ggg ) $ as a corollary. Finally, upper bounds for the spatial derivatives of $p_1 ( \ggg )$ are presented in Section \ref{Sn7}.

    \noindent
	\textbf{Notation.\;} With a slight abuse of notation, we will also write $|E|$ for the Lebesgue measure of a measurable set $E$, and $\cdot$ for the Euclidean inner product. $\nabla_{\theta}$ (resp. $\hess_{\theta}$) with $\theta \in \R^k$ denotes the usual gradient (resp. Hessian matrix) on $\R^k$.
  
	The symbols $C, c$ will be used throughout to denote implicit positive constants, which may vary from one line to the next. Whenever necessary, we use subscripts to specify the parameters which the constants $C$ and $c$ depend on. For non-negative functions $f_1$ and $f_2$, we adopt the notation $f_1 \sim f_2$ if there exists  $C>0$ such that $ C^{-1} f_2 \leq f_1 \leq C f_2$. Similarly, $f_1 \lesssim f_2$ (resp. $f_1 \gtrsim f_2$) if $f_1 \le C f_2$ (resp. $f_1 \ge C f_2$). Moreover, we will use the counterparts of such notation for positive semidefinite real matrices. Furthermore, for a complex-valued function $w$, by $w = O(f_1)$ we mean $|w| \le C f_1$.

    \section{Main results }\label{Sn2}

    Fix two points $g = (x , u ), \, g' = (x' , u') \in \R_x^n \times \R_u^{n'}$. We introduce the quantities
    \begin{gather}
        R^2 = |x|^2 + |x'|^2 ,\quad a = \frac{2 \, x \cdot x'}{ R^2 } \in [-1,1],\quad \xg= x + x',\quad r = |u -u'|.
    \end{gather}
    These parameters will be used throughout the paper.
     To facilitate the expression of the heat kernel, we further define 
    \begin{equation} \label{def_psi}
        \pa (\rr) \coloneqq \rr \coth \rr - a \, \frac{ \rr }{ \sinh \rr }=1- a + 2 \sum_{k=1}^\infty \frac{\rr^2 \, \left( 1 - a \, (-1)^k\right)}{\rr^2 + k^2 \,\pi^2},
    \end{equation}
 which follows from the classical expansions (cf. e.g. \cite[\S 1.421.3, \S 1.422.3]{GR15})
     \begin{equation} \label{eq_expansion}
        -\rr \cot \rr = -1 + 2\sum_{j=1}^ \infty \frac{\rr^2}{(j\pi)^2 - \rr^2 },\qquad \rr \csc \rr = 1 - 2\sum_{j=1}^\infty \frac{(-1)^{j} \, \rr^2}{(j\pi)^2-\rr^2} .
    \end{equation}
    Notice that 
    \begin{align}
       \psi_{-1}(\rr) = \rr \, \coth{\frac{\rr}{2}}. 
    \end{align}
    Then the expression of the Grushin heat kernel can be written as 
     \begin{align} \label{Eq_def_of_p}
         p_h (\ggg) & =  (4 \pi h) ^{-\frac{n}{2}-n'} \!\! \int_{\R^{n'}} \!\! \bV(\lambda) \, \exp \left\{ -\frac{1}{4h}\wtreffun(\ggg;\lambda)\right\} \, d \lambda,
     \end{align}
     where
    \begin{gather}
        \wtreffun( \ggg ; \lambda ) = R^2 \, \pa( |\lambda| ) - 2 \, i  \, ( u - u' ) \cdot \lambda,\\
        \bV( \lambda ) = \left( \frac{ |\lambda| }{ \sinh |\lambda| }\right)^{ \frac{n}{2} } = \prod_{ k \geq 1 } \left( 1 + \frac{ \lambda \cdot \lambda }{ k^2 \pi^2 } \right)^{ - \frac{n}{2} }. \label{26f}
    \end{gather} 
    In view of \eqref{eq_expansion}, \eqref{def_psi} and \eqref{26f}, we can extend the domain of $\wtreffun( \ggg ; \lambda )$ and $\bV( \lambda )$ to $\R^{n'} + i B_{\R^{n'}}(0, \pi) \subset \C^{n'}$, where $B_{\R^{n'}}(0, \rr)$ denotes the usual open ball centered at $0 \in \R^{n'}$ with radius $\rr > 0$. Then the meaning of $|\lambda| \csch{|\lambda|}$ and $|\lambda| \coth{|\lambda|}$ is obvious for suitable $\lambda \in \C^{n'}$.
    
    By virtue of the scaling property, namely
      \begin{equation} \label{eq_sp}
          p _h \Big( ( x , u ) , \, ( x' , u' ) \Big) = h^{ - \frac n 2 - n' } \, p_1 \left( \left( \frac{ x }{ \sqrt{ h } } , \frac{ u }{ h } \right) , \, \left( \frac{ x' }{ \sqrt{ h } }  , \frac{ u ' }{ h } \right) \right),
      \end{equation}
    it suffices to consider the heat kernel at time $1$, which we denote by $p(\ggg)$ for simplicity.

    We next derive the explicit expression for the Carnot-Carath\'eodory distance associated with $\Delta_G$. Set in the sequel,
    \begin{align}
     \ma( \rr )  := - \frac{ d }{ d \rr } \, \pa( i \, \rr ) =  4 \, \rr  \sum_{k=1}^\infty \frac{k^2 \, \pi^2 \, \left( 1 - a \, (-1)^k\right)}{(k^2 \,\pi^2 - \rr^2)^2},
    \end{align}
    which is obviously a monotonically increasing diffeomorphism from $(-\pi, \ \pi)$ onto $\R$ provided $-1 < a \le 1$, while from $(-\pi, \ \pi)$ onto $( - \frac{ \pi }{ 2 } ,  \ \frac{ \pi }{ 2 })$ for $a = -1$ by the fact that 
    \[
    \mu_{-1}(\rr) = \frac{\rr - \sin{\rr} }{2 \, \sin^2{\frac{\rr}{2}}}.
    \]
    Let $\ma^{-1}$ denote its inverse. It follows from \eqref{def_psi} that $\pa( i \, \rr )$ is operator convex on $(-\pi, \, \pi)$ in the sense of \cite{bhatia2013matrix}, then the following result can be readily deduced by applying the powerful method introduced in \cite{li2021carnotcaratheodory}:

    \begin{theorem}[\cite{li2021carnotcaratheodory}] We have:
    \begin{align}\label{Eq_expression_of_d}
       & d(\ggg)^2 = \sup_{\lambda \in B_{\R^{n'}}(0, \pi)}  \wtreffun( \ggg ; i \, \lambda ) \nonumber \\
       &= \left\{\begin{aligned}
            & 2 \pi r, \qquad\qquad\qquad\qquad\qquad\qquad\qquad\qquad\qquad\quad x = -x' \text{ and } 2r \geq \pi |x|^2 ,\\
            & \left( \frac{ | \theta | }{ \sin | \theta | } \right) ^ 2  R^2 ( 1 - a \cos |\theta| ) \text{ with } \theta = \ma^{-1} \left( \frac{ 2 r }{ R ^ 2 }\right) \frac{ ( u - u ' ) }{ r }, \quad \text{otherwise}.
        \end{aligned}\right.
    \end{align}
    \end{theorem}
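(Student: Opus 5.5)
The proof splits into two parts. The first is the variational identity $d^2=\sup_{\lambda\in B(0,\pi)}\wtreffun(\ggg;i\lambda)$, which we obtain from the general framework of \cite{li2021carnotcaratheodory} (the GM-property). The second is the explicit evaluation of that supremum, which is elementary calculus.

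\textbf{Variational identity.} The Grushin operator is a sub-Laplacian whose heat kernel has the Fourier-type representation \eqref{Eq_def_of_p}. Its phase is $\wtreffun$, and $\pa(i\rr)$ is operator convex on $(-\pi,\pi)$. These are exactly the hypotheses under which \cite{li2021carnotcaratheodory} identifies the squared CC distance with the supremum of the analytically continued phase.

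We prove the identity by two inequalities. For the upper bound $\sup\le d^2$, we use the Varadhan-type small-time limit $-4h\log p_h\to d^2$ together with a contour-shift estimate. Shifting $\lambda\mapsto\lambda+i\lambda_0$ in \eqref{Eq_def_of_p} gives $p_h\lesssim_{\lambda_0} h^{-N}e^{-\wtreffun(\ggg;i\lambda_0)/4h}$; here we use that $\mathrm{Re}\,\wtreffun$ is minimized on the real slice, a consequence of the product expansions. For the lower bound $d^2\le\sup$, we exhibit a horizontal curve whose squared length equals the supremum value. This curve is the Hamiltonian geodesic with covector $\theta$, the critical point of the supremum. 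Operator convexity makes $\lambda\mapsto\wtreffun(\ggg;i\lambda)$ strictly concave, so the critical point is unique and the geodesic is well defined.

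\textbf{Explicit computation.} By rotation invariance write $\lambda=s\,\omega$. Then $\wtreffun(\ggg;i\lambda)=R^2\pa(is)+2s\,(u-u')\cdot\omega$, where
\[
\pa(is)=s\cot s-a\,s/\sin s .
\]
The supremum over $\omega$ is attained at $\omega=(u-u')/r$, which reduces the problem to maximizing
\[
F(s)=R^2\pa(is)+2rs, \qquad s\in[0,\pi).
\]
Since $F'(s)=-R^2\ma(s)+2r$ and $\ma$ is increasing, the critical point is $s=\ma^{-1}(2r/R^2)$ whenever $2r/R^2$ lies in the range of $\ma$.

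For $-1<a\le1$ the range of $\ma$ is all of $\R$, so this always applies. Substituting, we simplify $F$ using $2r=R^2\ma(s)$ and the closed form
\[
\ma(s)=\frac{s-\sin s\cos s-a(s\cos s-\sin s)\cdots}{\sin^2 s},
\]
computed by differentiating $s\cot s - as\csc s$. The goal of the simplification is the identity $R^2\pa(is)+sR^2\ma(s)=R^2\,\frac{s^2}{\sin^2 s}(1-a\cos s)$. This is a direct trigonometric check: $s\cot s - as/\sin s + s\bigl(-\cot s + s\csc^2 s + a\csc s - a s\cot s\csc s\bigr)=s^2\csc^2 s\,(1-a\cos s)$.

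For $a=-1$ we have $x=-x'$, $R^2=2|x|^2$, and $\mu_{-1}$ has range $(-\pi/2,\pi/2)$. If $2r/R^2<\pi/2$, i.e.\ $2r<\pi|x|^2$, the same formula holds. Otherwise $F'>0$ on $[0,\pi)$, so the supremum is the boundary limit. Since $\psi_{-1}(is)=s\cot(s/2)\to0$ as $s\to\pi$, this gives $F(\pi^-)=2\pi r$. This matches the first line of \eqref{Eq_def_of_p}'s distance formula, and is consistent with the limit of the second line at $s\to\pi$ since $1+\cos s\to0$.

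\textbf{Main obstacle.} The hard step is the rigorous identification of $d^2$ with the supremum, particularly the lower bound (constructing an optimal curve). For $a=-1$ and $2r\ge\pi|x|^2$ the supremum is not attained, so no critical point exists. Here we plan to use approximate geodesics, taking $s\to\pi$ with the $x$-endpoint perturbed, and continuity of $d$. Alternatively, we would cite the GM-property machinery of \cite{li2021carnotcaratheodory}, checking its hypotheses as in \cite[Proposition~2.2]{li2021carnotcaratheodory}.
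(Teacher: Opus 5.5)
Your proposal is correct. The paper gives no argument beyond invoking \cite{li2021carnotcaratheodory} (Varadhan's formula plus operator convexity of $\pa(i\rr)$), and your plan is a concrete rendering of that route: the contour shift with Varadhan gives $\sup\le d^2$, concavity of $\lambda\mapsto\wtreffun(\ggg;i\lambda)$ singles out the maximiser $\theta$, a curve of squared length $\wtreffun(\ggg;i\theta)$ gives the reverse inequality, and one-variable calculus gives the explicit formula. The one step you only assert does hold: $x(t)=\frac{x\sin(|\theta|(1-t))+x'\sin(|\theta| t)}{\sin|\theta|}$ with $\dot u=|\theta|\,|x(t)|^2\,\frac{u'-u}{r}$ ends at $g'$ exactly when $\ma(|\theta|)=2r/R^2$ and has squared length $\frac{|\theta|^2}{\sin^2|\theta|}R^2(1-a\cos|\theta|)$, while in the degenerate case the curve $x(t)=x\cos\pi t+\frac{v}{\pi}\sin\pi t$, $\dot u=\pi\,|x(t)|^2\,\frac{u'-u}{r}$, with $|v|^2=\pi(2r-\pi|x|^2)$ has squared length exactly $2\pi r$, so no approximation argument is needed.
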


    In the special case where $n' = 1$, the control distance has been obtained in \cite{Paulat07ax}, by using a classical method. However, the more practical and robust approach adopted in \cite{li2021carnotcaratheodory} is based on the well-known Varadhan formulas and the operator convexity. 

    In what follows, unless otherwise specified, we always suppose that:
    \begin{align}
     \ggg \in \R_x^n \times \R _ u ^{n'} \  \mbox{with} \ \xg \ne 0 \ \mbox{and} \ \varepsilon := \pi - |\theta| > 0.  \tag{AN}
    \end{align}

    Our ultimate goal is to establish uniform asymptotic expansions for the Grushin heat kernel at infinity. To avoid introducing the rather involved notation needed for the full expansions at this point, we first state the resulting uniform sharp bounds for $p(g,g')$. The corresponding uniform asymptotic formulas will be derived in Sections \ref{Sn3} and \ref{Sn5}.

    \begin{theorem} \label{thm_main}
        Let $ \beta \coloneq \sqrt{ \frac{ 1 + a }{ 1 + a + \varepsilon^2 }}$. Then we have uniformly, under the condition (AN), that: 
        \begin{equation*}
            p (\ggg) \sim  e^{- \frac{ d(\ggg) ^2 }{ 4 } } \frac{ ( 1 + d(\ggg) ) ^ \frac n 2  \, {( 1 + d(\ggg) + R)}^{ - n' }  }{ 1 + \beta \sqrt{ \varepsilon} \, d(\ggg) + \varepsilon \, d(\ggg)} \left( \frac{ 1 + \beta  \sqrt{ d(\ggg) } }{ 1 + \beta \sqrt{ \varepsilon } \, d(\ggg) + \sqrt{\varepsilon \, d(\ggg)} } \right)^{n - 2}.
        \end{equation*}
    \end{theorem}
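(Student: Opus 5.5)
Starting from \eqref{Eq_def_of_p} with $h=1$, the plan is a steepest-descent analysis in $\lambda\in\R^{n'}$. First I split into two regimes: $d(\ggg)$ bounded and $d(\ggg)$ large. When $d(\ggg)\le C$, the right-hand side of Theorem~\ref{thm_main} is comparable to $(1+R)^{-n'}$. Combined with \eqref{12f} at $r=1$, this means the two-sided bound follows directly from the Sikora upper bound \eqref{ineq_upbd} and the lower bound \eqref{ineq_lowerbd}, since $V_g(1)V_{g'}(1)\sim(1+|x|)^{n'}(1+|x'|)^{n'}\sim(1+R)^{2n'}$ when $d$ is bounded; the case $x\cdot x'$ near $-R^2$ needs a small check. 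So the real task is $d(\ggg)\to\infty$.

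For large $d$, I shift the contour $\lambda\mapsto\lambda+i\theta$, where $\theta$ is the maximizer in \eqref{Eq_expression_of_d}. Using $\nabla_\lambda\wtreffun(\ggg;i\lambda)=0$ at $\lambda=\theta$, this gives
\[
p(\ggg)=(4\pi)^{-\frac n2-n'}e^{-\frac{d(\ggg)^2}{4}}\int_{\R^{n'}}\bV(\lambda+i\theta)\exp\Bigl\{-\tfrac14\bigl[\wtreffun(\ggg;\lambda+i\theta)-d(\ggg)^2\bigr]\Bigr\}d\lambda .
\]
By operator convexity (the expansion \eqref{def_psi} in partial fractions), the real part of the bracket is nonnegative and vanishes only at $\lambda=0$. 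I then treat separately the directions of $\lambda$ parallel and orthogonal to $\theta$; the Hessian at $0$ is block diagonal in these two directions.

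I split once more into two cases. In the \emph{simple case}, $\varepsilon$ is bounded below, or more precisely $\varepsilon\,d\gtrsim1$ with $\beta$ not small. Here $\bV(i\theta)$ is bounded, up to factors of $\varepsilon$, and ordinary Laplace's method gives
\[
p\approx e^{-d^2/4}\,\bV(i\theta)\,(\det\hess)^{-1/2}.
\]
The Hessian eigenvalues are of order $R^2\ma'(|\theta|)$ in the radial direction and $R^2\ma(|\theta|)/|\theta|\sim r$ in the $n'-1$ transverse directions. Converting $R$, $r$ and $\varepsilon$ into $d$ through \eqref{Eq_expression_of_d} should reproduce the stated formula. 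In the \emph{difficult case}, $\varepsilon\to0$ and $|\theta|\to\pi$, and the singularity of $\bV$ and $\pa$ at $|\lambda|=\pi$ approaches the saddle. For this I isolate the $k=1$ terms of the product \eqref{26f} and of the series \eqref{def_psi}. These contribute, respectively, a factor like
\[
\bigl(1-(\lambda+i\theta)^2/\pi^2\bigr)^{-n/2}
\]
and a term $\dfrac{R^2(1+a)\rho^2}{\pi^2-\rho^2}$ with $\rho^2=(\lambda+i\theta)\cdot(\lambda+i\theta)$; the remaining terms are smooth and perturbative.

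The reduced model integral is then an explicit Bessel-type integral in the radial variable, times a Gaussian in the transverse variables. I would rescale the radial variable by $\varepsilon$ and compare the exponent sizes $R^2(1+a)/\varepsilon$ and $r\varepsilon$ with $d$. The three regimes this produces are exactly what the denominators $1+\beta\sqrt\varepsilon\,d+\varepsilon d$ and the factor $(1+\beta\sqrt d)^{n-2}$ encode: either the pole dominates and the estimate is Bessel-like in $\beta\sqrt{\varepsilon}\,d$, or the Gaussian dominates.

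The main obstacle I expect is this difficult case. The estimates must be uniform across the transition $\varepsilon d\sim1$ and across $\beta\to0$ (that is, $a\to-1$, where the pole term degenerates and the exponent $n-2$ arises from the $n$-dimensional singularity of $\bV$). I would first handle the model integral through an exact formula for
\[
\int e^{-A/(\varepsilon+is)-Bs^2}(\varepsilon+is)^{-n/2}\,ds ,
\]
then control the remainder by contour deformation and tail bounds on $\Re\,\wtreffun$ away from $\lambda=0$. Finally, I would match the asymptotic formulas of the two cases on their overlap to obtain the unified two-sided bound.
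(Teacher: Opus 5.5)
\textbf{There is a genuine gap, in the difficult case $\varepsilon\to0$ when $n'\ge2$.}

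\textbf{What is fine.} Your reduction to $d(\ggg)\gg1$ via \eqref{ineq_upbd}, \eqref{ineq_lowerbd} and \eqref{12f} works. Since $|x-x'|\le d(\ggg)$, one has $1+|x|\sim1+|x'|\sim1+R$ there. Laplace's method for $\varepsilon$ bounded below is also fine; this is Theorem \ref{Thm_est_of_p_epsl_gtr_1}. Note, though, that the transverse eigenvalues of $-\hess_\theta\reffun$ are $2r/|\theta|$, which is $\sim R^2$ in that regime, not $\sim r$.

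\textbf{The coupling problem.} Write $\lambda=s\,\theta/|\theta|+\lambda_\perp$. The $k=1$ denominator is $\pi^2+\langle\lambda+i\theta\rangle^2$; with your sign there is no singularity at all. It satisfies
\[
\pi^2+\langle\lambda+i\theta\rangle^2=\varepsilon(2\pi-\varepsilon)+2is|\theta|+s^2+|\lambda_\perp|^2\approx 2\pi(\varepsilon+is)+|\lambda_\perp|^2 .
\]
So both the singular amplitude and the singular phase $-2\pi^2R^2(1+a)/(\pi^2+\langle\lambda+i\theta\rangle^2)$ depend on $\lambda_\perp$ at the scale $|\lambda_\perp|^2\sim|\varepsilon+is|$, jointly with $s$. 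The model does not split as ``Bessel integral in $s$ times Gaussian in $\lambda_\perp$''.

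\textbf{Why the error is not small.} Take $x'=0$ and $\varepsilon^2\ll|x|^2\lesssim\varepsilon$. Then $d(\ggg)\to\infty$, $\varepsilon\, d(\ggg)^2\lesssim1$, and $p(\ggg)\sim e^{-d^2/4}d^{n-n'-1}$. On the region that matters, $|s|$ and $|\lambda_\perp|^2$ are both of order $d^{-2}\gtrsim\varepsilon$. The transverse localization at width $d^{-1}$ is produced by the singular factor through its coupling with $s$, not by a Hessian:
\begin{itemize}
\item the regular part only gives a transverse Hessian $\sim R^2\ll d^2$ (cf.\ \eqref{48fn});
\item the eigenvalue $2r/|\theta|\sim d^2$ you would use comes from linearizing $1/(\varepsilon+is+|\lambda_\perp|^2/2\pi)$ in $|\lambda_\perp|^2$, which is illegitimate once $|\lambda_\perp|^2\gtrsim\varepsilon$.
\end{itemize}
Freezing $|\lambda_\perp|^2$ in $(\cdots)^{-n/2}$ therefore makes an $O(1)$ relative error exactly where the main contribution sits. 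Neither an asymptotic formula nor the required lower bound follows. For $n'=1$ there is no $\lambda_\perp$ and this issue disappears.

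\textbf{Two further problems.} The integral $\int e^{-A/(\varepsilon+is)-Bs^2}(\varepsilon+is)^{-n/2}\,ds$ is not an explicit Bessel function once $B>0$. Also, ``matching on overlaps'' presupposes uniform remainder bounds that your plan does not supply.

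\textbf{The missing idea.} This is the identity behind Proposition \ref{Thm_transform}: write $(1+\langle\lambda\rangle^2/\pi^2)^{-n/2}$, together with the singular part of the phase, as a Gaussian integral over an auxiliary $s\in\R^n$. Since $\langle\lambda\rangle^2$ is a quadratic polynomial in $\lambda$, shift to $\R^{n'}+i\theta$ and substitute $\xi=(1-|\theta|^2/\pi^2)^{1/2}s$. The inner integral $\F(\ggg;\xi)$ is then a non-singular Laplace integral with quadratic form
\[
\Ag=\frac{|\xi|^2}{\pi^2-|\theta|^2}\,\I_{n'}-\frac14\hess_\theta\ff(\ra;\theta)\sim\Lgg\,\I_{n'} .
\]
This form is isotropic in all $n'$ directions, so the radial/transverse coupling is absorbed exactly. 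Theorem \ref{thm_jde52} and Proposition \ref{Prop_esti_of_F_rgn_others} then give the two-sided reduction \eqref{59fn}, uniformly over the whole difficult case. Finally, apply \eqref{estmate_Ip} with $\Yy=\yg$ and $\ss=\varepsilon^2 d(\ggg)^2$, together with $|\yg|\sim\beta\sqrt{\varepsilon}\,d(\ggg)$ and \eqref{Eq_esti_of_d}. This yields the stated formula in all regimes at once, with no matching needed. The exponent $n-2$ comes from the $n$-dimensional $\xi$-integral.
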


    \begin{remark}
        When $n'=1$, our uniform estimates recover the corresponding results established in \cite{Li12} for $n \ge 3$ and \cite[Theorem 6 and Remark 10]{LZ19} for $n = 1, 2$.
    \end{remark}

    As an analogue of the classical Euclidean setting, we obtain the following sharp bounds for spatial derivatives of \(p_h(\ggg)\). 
    
    \begin{theorem} \label{thm_deriv}
        Let $\nabla_G = \Big( (X_j)_{1 \le j \le n}, \, ( U_{jk}) _{1 \le j \le n, \, 1 \le k \le n' } \Big)$ denote the horizontal gradient. It holds, for any $g , \, g' \in \R_x^n \times \R_u ^{n'}$ and $h > 0$, that
        \begin{equation}
   \left| \nabla_G^l \, p_h(\ggg) \right| \lesssim_l \, h^{- \frac l 2} \left( 1 + \frac{d (\ggg)}{ \sqrt{h} } \right)^l \, p_h (\ggg), \quad l = 1, 2, 3, \ldots.
        \end{equation}
    \end{theorem}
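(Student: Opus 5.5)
By the scaling property \eqref{eq_sp}, we may take $h=1$: each $X_j$ and $U_{jk}$ is homogeneous of degree $-1$ under the dilation $(x,u)\mapsto(x/\sqrt h,u/h)$. It then suffices to prove
\[
\bigl|\nabla_G^l\,p(\ggg)\bigr|\lesssim_l (1+d(\ggg))^l\,p(\ggg),
\]
with derivatives taken in $g$ (the case of $g'$ is symmetric). When $d(\ggg)$ is bounded, \eqref{ineq_derivupbd} and \eqref{ineq_lowerbd} with a small $\epsilon$ already give this, because $V_g(1)\sim V_{g'}(1)$ up to factors controlled by $d(\ggg)$ thanks to \eqref{12f}. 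So the real task is the regime $d(\ggg)\to\infty$. The exceptional set $\xg=0$ or $\varepsilon\le 0$ is handled at the end by continuity, or by a separate argument.

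The key step is to differentiate under the integral \eqref{Eq_def_of_p}. Applying $X_j$ or $U_{jk}$ to $\exp\{-\tfrac14\wtreffun(\ggg;\lambda)\}$ yields a polynomial in the first- and higher-order derivatives of $\wtreffun$, times the exponential. We have
\[
X_j\wtreffun = 2x_j\,\pa(|\lambda|)-2x'_j\,\tfrac{|\lambda|}{\sinh|\lambda|}\cdot(\ldots),\qquad U_{jk}\wtreffun=-2i\,x_j\lambda_k .
\]
Thus $\nabla_G^l p$ is a finite sum of integrals
\[
\int \bV(\lambda)\,P_l(x,x',\lambda)\,e^{-\wtreffun/4}\,d\lambda,
\]
where $P_l$ has degree at most $l$ in $(x,x')$ and is built from $\pa$, $|\lambda|/\sinh|\lambda|$ and $\lambda$. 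Second derivatives of $\wtreffun$ in $x$ are bounded functions of $\lambda$. I would then shift the contour to $\lambda=i\theta+\eta$, the stationary point used in the paper's asymptotics (Sections \ref{Sn3}--\ref{Sn5}). On that contour the amplitude factors satisfy
\[
|X_j\wtreffun|,\ |U_{jk}\wtreffun|\lesssim R\,\bigl(|\pa(i|\theta|)|+|\theta|\bigr)+\ldots,
\]
and one checks from \eqref{Eq_expression_of_d} that $R\,|\theta|/\sin|\theta|\lesssim d(\ggg)$, so the derivatives of the phase at the critical point are $O(d)$.

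The difficulty is that the amplitude is not bounded by $d^l$ uniformly along the whole contour. Near $|\theta|\to\pi$ (small $\varepsilon$), the factors $1/\sin|\theta|$ and the degeneracy of the Hessian produce the delicate structure seen in Theorem \ref{thm_main}. I would therefore rerun the uniform asymptotic analysis of Sections \ref{Sn4}--\ref{Sn5} with the extra polynomial amplitude $P_l$. The expected conclusion is that the resulting integral is bounded by $\sup|P_l|$ over the effective region, which is $O((1+d)^l)$, times the same profile as in Theorem \ref{thm_main}.

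The main obstacle is verifying that, in the region where the amplitude is larger (for instance $|\eta|\sim 1/\sqrt{\varepsilon d}$ in the degenerate directions), the growth of $P_l$ is at most $d^l$. Equivalently, the effective $|\eta|$ must keep $|\lambda|\csch|\lambda|$ and $\pa$ controlled by $O(d/R)$. My first attempt would be to write
\[
P_l=\sum_{m\le l} R^{m}Q_m(\lambda),
\]
and bound each $Q_m$ by $\bigl(|\theta|/\sin|\theta|+|\eta|\bigr)^m$ using $R/\varepsilon\lesssim d$ when $\varepsilon$ is small. Higher powers of $|\eta|$ would be absorbed by the Gaussian-type decay $e^{-c\,d\,\varepsilon|\eta|^2}$ that was already used in the proof of Theorem \ref{thm_main}, at the cost of factors $(\varepsilon d)^{-m/2}$. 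These factors are at most $d^m$ as long as $\varepsilon d\gtrsim d^{-1}$. The complementary case $\varepsilon\lesssim d^{-2}$ would be treated as in the paper's "near $2\pi r$" regime, where $d^2\approx 2\pi r$ and the amplitude is directly $O(R+\sqrt r)=O(d)$.
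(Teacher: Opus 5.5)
Your reductions match the paper: scaling to $h=1$, the bounded-$d$ case via \eqref{ineq_lowerbd}--\eqref{ineq_derivupbd}, and continuity across $\xg=0$. The argument for the degenerate regime $\varepsilon<\delta_0$, however, has a genuine gap.

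\textbf{The key inequality is false.} You rely on $R/\varepsilon\lesssim d(\ggg)$, equivalently $R\,|\theta|/\sin|\theta|\lesssim d(\ggg)$. By \eqref{Eq_expression_of_d} and \eqref{Eq_esti_of_d}, $d(\ggg)\sim R\sqrt{1+a+\varepsilon^2}/\varepsilon$, so this holds only when $1+a\gtrsim 1$. In case (ii) of Theorem \ref{Thm_est_of_p_epsl_ll_1} ($1+a\le\varepsilon^2$) one has $d(\ggg)\sim R$. There, bounding $Q_m$ by $(|\theta|/\sin|\theta|+|\eta|)^m$ overshoots by a factor $\varepsilon^{-m}$, and this happens exactly near the cut locus $x\approx -x'$, $2r>\pi|x|^2$. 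What you discard is a cancellation. On $\R^{n'}+i\theta$, the poles at $\lambda\cdot\lambda=-\pi^2$ of $-x_j|\lambda|\coth|\lambda|$ and $x_j'|\lambda|\csch|\lambda|$ combine into $-\frac{2\,\lambda\cdot\lambda}{\lambda\cdot\lambda+\pi^2}\,(\xg)_j$. This term has size $\lesssim|\xg|/\varepsilon=R\sqrt{1+a}/\varepsilon\lesssim d(\ggg)$. The remainder is holomorphic on $\R^{n'}+iB_{\R^{n'}}(0,2\pi)$ and is $O(R(1+|\lambda|))$.

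\textbf{Absolute values in \eqref{Eq_def_of_p} lose too much.} Even a correct amplitude bound would not give ``$\sup|P_l|$ times the profile of Theorem \ref{thm_main}'' if you estimate on the contour $\lambda=i\theta+\eta$ of \eqref{Eq_def_of_p}, because the integrand is strongly oscillatory there. For example, take $n'=1$, $n\ge 3$, fix $x,u,u'$ with $2r>\pi|x|^2$, and let $x'\to -x$. Then $\varepsilon\to0$. For $|\lambda|\le 1/R$ one has $|\bV(\lambda+i\theta)|\approx \pi^{n/2}(\lambda^2+\varepsilon^2)^{-n/4}$ and $|e^{-\frac14\wtreffun(\ggg;\lambda+i\theta)}|\gtrsim e^{-d(\ggg)^2/4}$. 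Hence the integral of the modulus is $\gtrsim e^{-d(\ggg)^2/4}\,\varepsilon^{1-\frac n2}\to\infty$, while $p(\ggg)$ stays bounded.

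The machinery of Sections \ref{Sn4}--\ref{Sn5} works only after the Gaussian subordination of Proposition \ref{Thm_transform}, where the inner integral $\F$ is asymptotically positive. The decay $e^{-c\,d\,\varepsilon|\eta|^2}$ and the separate regime $\varepsilon\lesssim d^{-2}$ that you invoke do not occur anywhere in the paper's argument.

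\textbf{How the paper proceeds.} It differentiates the subordinated formula \eqref{Eq_transformed_1_of_p} itself. There $x_j$ enters only through $e^{-\frac12|s-\xg|^2}$ and the regular phase $\wtff(\ra;\lambda)$. So $X_j$ produces two factors:
\begin{itemize}
\item $s_j-(\xg)_j$, which is $O(d)$ on $\adr_0$ and $\rgn_0$ and is absorbed by the exponential decay elsewhere;
\item $\partial_{x_j}\wtff(\ra;\lambda+i\theta)=O(R(1+|\lambda|^2))$, with $R\lesssim d$.
\end{itemize}
Similarly, $U_{jk}$ produces $x_j(\lambda_k+i\theta_k)=O(R(1+|\lambda|))$. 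These polynomial factors are then absorbed by the estimates of Theorem \ref{thm_jde52} and Proposition \ref{Prop_esti_of_F_rgn_others}. You could repair your route equivalently: split the amplitude as above and insert it into the subordinated representation, rather than estimating on \eqref{Eq_def_of_p}.
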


    \begin{remark} 
     In contrast to the classical Euclidean setting and \cite[(6.1) and (8.2)]{LZ2025}, the strongest gradient estimate of type $\left| \nabla_G \, \ln{p_h(\ggg)} \right| \lesssim d(\ggg)/h$ fails because 
     $\frac{\partial}{\partial x_1} p((x, u),(x, u)) \neq 0$ whenever  $x = (x_1, \ldots, x_n)$ with $x_1 \neq 0$. 
    \end{remark}

    \section{\texorpdfstring{Uniform asymptotics for \(p\) in the simple regime: $ \varepsilon \ge \delta_0^* > 0$}{}}\label{Sn3}

    Let us begin with uniform asymptotics of \(p(\ggg)\) under the assumption that $d(\ggg) \longrightarrow +\infty$ with $\varepsilon = \pi - |\theta| \ge \delta_0^* > 0$, where $0 < \delta_0^* \le \frac{\pi}{8}$. In such a case, it follows from \eqref{Eq_expression_of_d} that $d(\ggg)^2 \lesssim_{\delta_0^*} R^2$. The following more general asymptotics for \(p(\ggg)\) hold:
    
     \begin{theorem}\label{Thm_est_of_p_epsl_gtr_1} 
        Let $0 < \delta_0^* \le \frac \pi 8$. Then there exists a constant $C(\delta_0^*) > 0$ such that:
        \begin{equation} \label{Eq_asym_of_p_epsl_gtr_1}
        p(\ggg) = \frac{ (4 \pi ) ^{- \frac{n}{2} - n'} (8 \pi ) ^{\frac{n'}{2}} \, \bV ( i \theta )  }{ \sqrt{ \det ( - \hess_\theta \, \reffun ( \ggg ; \theta ) )} } \, e^{- \frac{ d(\ggg)^2 }{ 4 }} \, ( 1 + \O_{\delta_0^*} ( R ^{-\frac 14} ) ), 
    \end{equation}
    whenever $R \ge C(\delta_0^*)$ with $\varepsilon \ge \delta_0^*$; as a consequence, in such a situation, we have:
    \begin{equation} \label{Eq_est_of_p_epsl_gtr_1}
            p  (\ggg) \sim_{\delta_0^*}
            R^{-n'} e^{-\frac{d(\ggg)^2}{4}} \sim e^{-\frac{d(\ggg)^2}{4}} \, \min\Big\{ \frac{1}{V_g(1)}, \frac{1}{V_{g'}(1)} \Big\} \quad \mbox{(cf. \eqref{12f})}. 
    \end{equation}   
    \end{theorem}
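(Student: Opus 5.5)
The plan is a steepest-descent analysis of \eqref{Eq_def_of_p} at $h=1$ after shifting the contour to the critical point. Set $\reffun(\ggg;\theta) := \wtreffun(\ggg; i\theta) = R^2\pa(i|\theta|) + 2(u-u')\cdot\theta$ for $\theta \in B_{\R^{n'}}(0,\pi)$. By \eqref{Eq_expression_of_d}, $\theta$ is the unique maximiser of this real-analytic, strictly concave function, and $\reffun(\ggg;\theta)=d(\ggg)^2$.

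\textbf{Contour shift.} Write $\lambda = \eta + i\theta$ with $\eta\in\R^{n'}$. Since $|\theta| = \pi-\varepsilon \le \pi - \delta_0^*$, the shifted contour $\R^{n'}+i\theta$ lies inside $\R^{n'}+iB(0,\pi)$, where $\bV$ and $\wtreffun$ are holomorphic. Both decay at infinity along the strip: $\bV$ decays like $e^{-\frac n2|\Re\lambda|}$ times polynomial factors, and $\Re\, \pa$ grows linearly. Cauchy's theorem therefore gives
\[
p(\ggg) = (4\pi)^{-\frac n2-n'} \int_{\R^{n'}} \bV(\eta+i\theta)\, e^{-\frac14 \wtreffun(\ggg;\eta+i\theta)}\, d\eta ,
\]
with $\wtreffun(\ggg;i\theta) = d(\ggg)^2$. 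Stationarity of $\reffun$ at $\theta$ means the linear term in $\eta$ vanishes. The quadratic term is $\frac14\,\eta\t(-\hess_\theta\reffun)\eta$, because $\wtreffun(\ggg; i\theta+\eta)$ expanded in $\eta$ produces $-\hess$ at second order.

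\textbf{Local Gaussian piece.} I would split the integral into $|\eta|\le R^{-1+\kappa}$ and its complement, for a small $\kappa$ (say $\kappa=1/4$, which is where the error $R^{-1/4}$ would come from).
\begin{enumerate}[(i)]
\item The scaling step relies on the fact that, for $\varepsilon\ge\delta_0^*$, $-\hess_\theta\reffun \sim R^2 \I$ uniformly. The radial and angular eigenvalues are $R^2\ma'(|\theta|)$ and $R^2\ma(|\theta|)/|\theta|$, both comparable to $R^2$ when $a>-1$ or $|\theta|$ is bounded away from $\pi$. When $a$ is near $-1$, one must check that $\ma$ stays bounded below in the relevant range. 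This is where the constraint $d^2\lesssim R^2$ from $\varepsilon\ge\delta_0^*$ enters.
\item With $-\hess_\theta\reffun \sim R^2\I$, the Gaussian has width $R^{-1}$. Cubic Taylor errors are of size $R^2|\eta|^3 \le R^{-1+3\kappa}$, and $\bV(\eta+i\theta) = \bV(i\theta)(1+O(|\eta|))$ with $\bV(i\theta)>0$.
\item Evaluating the Gaussian integral gives the main term $(4\pi)^{n'/2}\pi^{n'/2}\dots = (8\pi)^{n'/2}$-type constant over $\sqrt{\det(-\hess_\theta\reffun)}$, as in \eqref{Eq_asym_of_p_epsl_gtr_1}.
\end{enumerate}

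\textbf{Tail, the main obstacle.} The hardest step is the tail estimate, which requires a uniform lower bound
\[
\Re\,\wtreffun(\ggg;\eta+i\theta) - d(\ggg)^2 \ge c\,R^2\min\{|\eta|^2,|\eta|\} .
\]
The imaginary-linear term contributes nothing to the real part, so this reduces to $\Re\,\pa(|\eta+i\theta|) - \pa(i|\theta|) + \text{(linear correction)} \gtrsim \min(|\eta|^2,|\eta|)$. I would prove it from the partial-fraction expansion \eqref{def_psi}, handling each summand $\frac{z^2}{z^2+k^2\pi^2}$ with $z^2=(\eta+i\theta)\cdot(\eta+i\theta)$ separately. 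Its real part exceeds the value at $\eta=0$ plus the first-order term by a positive quantity; this is the operator-convexity/positivity argument of \cite{li2021carnotcaratheodory} and \cite{LZ2025}. The coefficients $1-a(-1)^k\ge0$ are handled by using $k$ even terms when $a$ is near $1$, and the $k=1$ term when $a$ is near $-1$.

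With this bound, the tail contributes $e^{-d^2/4}\, O(e^{-cR^{2\kappa}})$. Since $|\bV|\lesssim 1$ there, this is negligible relative to $R^{-n'}$.

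\textbf{Consequence \eqref{Eq_est_of_p_epsl_gtr_1}.} Part (i) gives $\det(-\hess_\theta\reffun)\sim R^{2n'}$. Also $\bV(i\theta) = (|\theta|/\sin|\theta|)^{n/2} \sim_{\delta_0^*} 1$. Together these yield $p \sim R^{-n'}e^{-d^2/4}$. Finally, \eqref{12f} gives $V_g(1)\sim 1+|x|^{n'}$, and $\max(|x|,|x'|)\sim R$, which produces the $\min$ form.
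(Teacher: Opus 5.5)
Your strategy is the one the paper intends. The paper leaves the details to the reader and points to the simple-case argument of \cite[pp.~13--17]{LZ2025}, made elementary through \eqref{def_psi} and \eqref{26f}: shift to $\R^{n'}+i\theta$, a Laplace expansion with $-\hess_\theta\reffun(\ggg;\theta)\sim R^2\,\I_{n'}$, and a tail bound from termwise positivity of the partial fractions. The argument works, but the justification you give for the tail bound contains a concrete slip.

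The coefficient $1-a(-1)^k$ equals $1-a$ for even $k$ and $1+a$ for odd $k$. So the even terms vanish as $a\to1$, and the $k=1$ term vanishes as $a\to-1$. These are exactly the terms you propose to use in those regimes.

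The correct bookkeeping is as follows. Put $A_k:=k^2\pi^2-|\theta|^2\sim_{\delta_0^*}k^2$ and use $\Re\,(A_k+|\eta|^2+2i\,\eta\cdot\theta)^{-1}\le(A_k+|\eta|^2)^{-1}$. This gives
\[
\Re\,\pa(|\eta+i\theta|)-\pa(i|\theta|)\ \ge\ 2\sum_{k\ge1}\bigl(1-a(-1)^k\bigr)\,\frac{k^2\pi^2\,|\eta|^2}{A_k\,(A_k+|\eta|^2)} .
\]
Since $(1-a)+(1+a)=2$, use all odd $k$ when $a\ge0$ and all even $k$ when $a\le0$.

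You must sum over the whole favourable parity class to get your claimed $\min\{|\eta|^2,|\eta|\}$. A single term gives only $\min\{|\eta|^2,1\}$. That weaker bound would then have to be combined with the exponential decay of $|\bV(\eta+i\theta)|\le\bV(i\theta)\,\bV(\eta)$ to integrate the tail.

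There is also no linear correction. The real part of $-2i(u-u')\cdot(\eta+i\theta)$ is the constant $2(u-u')\cdot\theta$, which cancels exactly against $d(\ggg)^2$.

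Two smaller points.
\begin{enumerate}
\item The constant. Since $\wtreffun(\ggg;\eta+i\theta)=\reffun(\ggg;\theta-i\eta)$, its quadratic part is $\frac12\eta^{\mathrm{T}}H\eta$ with $H=-\hess_\theta\reffun(\ggg;\theta)$. The exponent therefore carries $-\frac18\eta^{\mathrm{T}}H\eta$, and the Gaussian integral equals $(8\pi)^{n'/2}(\det H)^{-1/2}$. Your factor $\frac14$ and the product $(4\pi)^{n'/2}\pi^{n'/2}$ do not produce this constant.
\item Step (i) needs neither special care near $a=-1$ nor the bound $d(\ggg)^2\lesssim R^2$. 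In the series for $\ma(\rr)/\rr$ and for $\ma'(\rr)$, the $k=1$ and $k=2$ terms carry the factors $1+a$ and $1-a$. Hence both eigenvalues $R^2\ma'(|\theta|)$ and $R^2\ma(|\theta|)/|\theta|$ are $\sim_{\delta_0^*}R^2$ uniformly in $a\in[-1,1]$ once $|\theta|\le\pi-\delta_0^*$.
\end{enumerate}
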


    In particular, if we disregard the order of the remainder, it extends \cite[(1.19)]{Li12} in the case where $n' = 1$. Theorem \ref{Thm_est_of_p_epsl_gtr_1} follows readily from the method introduced in 
    \cite{li2021carnotcaratheodory}. Although the Grushin operator $\Delta_G$ does not admit any group structure, it can still be considered as a special case of \cite[Theorem~4.1]{LZ2025}. Indeed, for the simple case, the proof in \cite[pp. 13-17]{LZ2025} can be made more elementary by means of \eqref{def_psi} and \eqref{26f}. Details are left to the interested reader.

\section{\texorpdfstring{Sharp estimates for $p$ in the difficult case: 
$0 < \varepsilon \le \delta_0 := \frac{\pi}{8}$}{}} \label{Sn4}

    Define
    \begin{align} \label{ygf}
        \delta_0 := \frac{\pi}{8}, \qquad \yg = \left(1 - \frac{|\theta|^2}{\pi^2}\right)^{-\frac{1}{2}} \xg, \quad \mbox{then $|\yg|^2 \sim \frac{R^2 \, (1 + a)}{\varepsilon}$.}
    \end{align}
    In the case where \( 0 < \varepsilon \le \delta_0 \), we have the following sharp uniform bounds for \(p (\ggg) \):

    \begin{theorem} \label{Thm_est_of_p_epsl_ll_1}
        Let $\ggg \in \R_x^n \times \R^{n'}_u$ satisfy \(0 < \varepsilon \le \delta_0\). Then there exists a sufficiently large constant $\BR > 0$ so that, whenever \(d (\ggg) \ge  \BR\), 

        (i) if \(1 + a \ge \varepsilon^2\), then
        \begin{equation} \label{Eq_1+a_gtr_epsl}
            p (\ggg) \sim e^{-\frac{d(\ggg)^2}{ 4 }} \, \frac{ d (\ggg) ^{ n - n' - 1 } }{\left(1 + \sqrt{\varepsilon } \,d (\ggg) \right)^{n-1}};
        \end{equation}

        (ii) if \(1 + a \le \varepsilon^2\), then
        \begin{equation} \label{Eq_1+a_small}
            p (\ggg) \sim e^{- \frac{d (\ggg) ^2}{4}} \, \varepsilon^{-\frac{n}{2}} \, d(\ggg)^{-n'} \frac{ \varepsilon \, d (\ggg) }{1 + |\yg| + \varepsilon \, d (\ggg) }\left(\frac{|\yg|^2 + \varepsilon \, d(\ggg) }{ 1 + |\yg|^2 + \varepsilon \, d (\ggg) }\right)^{\frac{n}{2} - 1} .
        \end{equation}
    \end{theorem}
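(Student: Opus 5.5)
Starting from the integral representation \eqref{Eq_def_of_p} at $h=1$, I will shift the contour $\lambda \mapsto \lambda + i\theta$, where $\theta = \ma^{-1}(2r/R^2)\,(u-u')/r$ is the critical point realising the supremum in \eqref{Eq_expression_of_d}. This is allowed because $\bV$ and $\wtreffun$ extend holomorphically to $\R^{n'} + iB_{\R^{n'}}(0,\pi)$ and $|\theta| = \pi - \varepsilon < \pi$. After the shift,
\[
p(\ggg) = (4\pi)^{-\frac n2 - n'}\, e^{-\frac{d(\ggg)^2}{4}} \int_{\R^{n'}} \bV(\lambda + i\theta)\, \exp\Big\{-\tfrac14\big[\wtreffun(\ggg;\lambda+i\theta) - d(\ggg)^2\big]\Big\}\, d\lambda ,
\]
and the bracket has vanishing linear term at $\lambda=0$. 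The task is then a uniform Laplace-type analysis of this integral.

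The difficulty is that when $\varepsilon \to 0$ the point $i\theta$ approaches the poles at $|\lambda| = i\pi$ of $\bV$ and of the $k=1$ term in \eqref{def_psi}. Since $\bV$ is radial, I will work in coordinates adapted to $\theta$: $\lambda = s\,\theta/|\theta| + \lambda_\perp$. Using the product and series expansions \eqref{def_psi}, \eqref{26f}, I will isolate the singular factor
\[
\frac{\pi^2}{\pi^2 + (\lambda+i\theta)\cdot(\lambda+i\theta)} \approx \frac{\pi}{2\varepsilon + |\lambda_\perp|^2/\pi - 2is + \dots}
\]
from a smooth remainder that stays bounded uniformly in $\varepsilon \le \delta_0$. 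The $k=1$ singular piece of $\pa$ has coefficient $R^2(1+a)$, while $\bV$ contributes the power $-\tfrac n2$ of the same factor. Hence the integrand becomes, up to harmless factors,
\[
\big(2\varepsilon + |\lambda_\perp|^2/\pi - 2is\big)^{-n/2} \exp\Big\{-c\,\frac{R^2(1+a)}{2\varepsilon + |\lambda_\perp|^2/\pi - 2is} - Q(s,\lambda_\perp)\Big\},
\]
where $Q$ is a quadratic form coming from the regular part. Its Hessian in the radial direction is of size $R^2/\varepsilon^3 \sim d^2/\varepsilon$ by differentiating $\ma$ near $\pi$. In the transverse directions it is of size $d^2 \sim R^2$, using $d(\ggg)^2 \approx (\pi/\varepsilon)^2 R^2(1+a) + \dots$ from \eqref{Eq_expression_of_d}.

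The core computation is then an integral of the form $\int w^{-n/2} e^{-A/w}\,(\cdots)$. It is a Bessel-type integral, which I will evaluate uniformly via the representation
\[
w^{-\nu} e^{-A/w} = \frac{1}{\Gamma(\nu)} \int_0^\infty t^{\nu-1} e^{-tw - A/w}\,dt,
\]
or directly by a scaled Laplace method. The relevant parameter is $|\yg|^2 \sim R^2(1+a)/\varepsilon$ compared with $\varepsilon\, d(\ggg)$ and $1$.
\begin{enumerate}
\item[(i)] If $1+a \ge \varepsilon^2$, then $|\yg|$ is large relative to $\varepsilon\, d$. The saddle is nondegenerate, with effective width $\min(1, (\sqrt{\varepsilon}\, d)^{-1})$ per transverse direction. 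This gives the factor $d^{\,n-n'-1}(1+\sqrt{\varepsilon}\,d)^{1-n}$: $n-1$ Gaussian factors from the $x$-variables' pole contribution, and $d^{-n'}$ from the $\lambda$-integration.
\item[(ii)] If $1+a \le \varepsilon^2$, the pole term is comparable to or weaker than the quadratic term. I will separate into subcases $|\yg| \le 1$, $1 \le |\yg|^2 \le \varepsilon d$, and $|\yg|^2 \ge \varepsilon d$. In each subcase I will bound the integral above and below, producing the factors $\varepsilon\, d/(1+|\yg|+\varepsilon d)$ and $\big((|\yg|^2+\varepsilon d)/(1+|\yg|^2+\varepsilon d)\big)^{n/2-1}$ together with $\varepsilon^{-n/2}$ from $|\bV(i\theta)| \sim \varepsilon^{-n/2}$.
\end{enumerate}

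Away from a neighbourhood of $\lambda = 0$ (in the rescaled variables), I will show the real part of the phase exceeds its value at $0$ by a quantity $\gtrsim$ the local quadratic size. This uses the strict operator convexity of $\pa(i\cdot)$ and the positivity of the coefficients $1 - a(-1)^k$ in \eqref{def_psi}. It makes the tails negligible for $d \ge \BR$.

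I expect the main obstacle to be the lower bound in case (ii) when $|\yg| \sim 1$ and $\varepsilon d$ is moderate. There the integrand oscillates through $-2is$ and no term dominates, so positivity of the main part is not automatic. I would handle it by writing the singular integral exactly as a modified Bessel function $K_\nu$ / confluent expression in $\sqrt{A\cdot(\cdot)}$ and using its known two-sided asymptotics, in the spirit of \cite{Li12, LZ19} for $n'=1$. I would then treat the remaining $n'-1$ transverse directions by Gaussian integration uniformly in the parameters.
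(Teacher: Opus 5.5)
\paragraph{Gap in the proposal.} A direct Laplace analysis of the $\lambda$-integral after the shift by $i\theta$ cannot be made uniform, and the tool you propose for the bad regime does not address the actual singular structure. Put $w:=\pi^2+(\lambda+i\theta)\cdot(\lambda+i\theta)$, $w_0:=\pi^2-|\theta|^2$ and $d=d(\ggg)$.

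\emph{The sign of the singular term.} The $k=1$ term of $\pa$ is $2(1+a)-2\pi^2(1+a)/(\rr^2+\pi^2)$, so it enters $-\frac14\wtreffun$ as $+\pi^2|\xg|^2/(2w)$. The singular factor is therefore $w^{-n/2}e^{A/w}$ with $A=\pi^2|\xg|^2/2>0$, not $e^{-cR^2(1+a)/w}$. Its natural special function is $I_{n/2-1}$, not $K_\nu$. Your identity $w^{-\nu}e^{-A/w}=\Gamma(\nu)^{-1}\int_0^\infty t^{\nu-1}e^{-tw-A/w}\,dt$ only expands $w^{-\nu}$ and leaves the singular exponential inside the $\lambda$-integral.

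\emph{The saddle is not nondegenerate throughout case (i).} There $|\yg|\sim\sqrt\varepsilon\,d$. When this is $\lesssim1$, the Gaussian width $\sqrt\varepsilon/d$ along $\theta$ (the radial Hessian is $\sim|\xg|^2/\varepsilon^3\sim d^2/\varepsilon$) exceeds the scale $\varepsilon$ on which $w$, and hence both $w^{-n/2}$ and $e^{A/w}$, vary. Applying the Gaussian formula anyway gives $\bV(i\theta)\,(\det H)^{-1/2}\sim\varepsilon^{-(n-1)/2}d^{-n'}$, where $H$ is the Hessian at $\lambda=0$ with radial entry $\sim d^2/\varepsilon$ and transverse entries $\sim d^2$. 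This exceeds the true size $d^{\,n-n'-1}$ by the factor $(\sqrt\varepsilon\,d)^{1-n}$, which is unbounded for $n\ge2$.

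\emph{The tail argument also fails here.} One has $\Re(A/w)-A/w_0\ge -A/w_0=-\tfrac12|\yg|^2$, so for $|\yg|\lesssim1$ the singular term localizes nothing. Meanwhile the rest of the exponent carries the linear term $i\pi^2|\xg|^2w_0^{-2}\,\theta\cdot\lambda$, of size $|\xg|^2/\varepsilon^2$. The singular term cancels it only for $|\lambda\cdot\theta|\lesssim\varepsilon$. In this regime the integral is produced by cancellation, not by a saddle. For the same reason, your bookkeeping ``$n-1$ Gaussian factors from the $x$-variables'' has no derivation inside an $n'$-dimensional $\lambda$-integral.

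\paragraph{What is missing.} The difficulty you flag in case (ii) is therefore not confined to one corner. It also cannot be removed by a one-dimensional Bessel formula in $s$ followed by Gaussian integration in $\lambda_\perp$: $w$ depends on $|\lambda_\perp|^2$ at exactly the scale $\sqrt\varepsilon$ where the structure lives, so the problem does not separate. The missing step is the idea of Proposition~\ref{Thm_transform}. Before doing any asymptotics, linearize the \emph{entire} singular factor, for all $\lambda\in\R^{n'}$, via the exact identity
\[
\Big(\frac{\pi^2}{w}\Big)^{\frac n2}e^{\frac{\pi^2|\xg|^2}{2w}}=(2\pi)^{-\frac n2}\int_{\R^n}e^{-\frac{w}{2\pi^2}|s|^2+\xg\cdot s}\,ds ,\qquad \Re w>0 .
\]
Its radial form is a positive $I_{n/2-1}$-kernel, which is the correct version of your Bessel idea.

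Since $w$ is quadratic in $\lambda$, it then sits linearly in the exponent. The inner integral $\F(\ggg;\xi)$ becomes a Laplace integral with Hessian $\Ag\sim(R^2+|\xi|^2/\varepsilon)\,\I_{n'}$, nondegenerate uniformly in $\varepsilon$ (Theorem~\ref{thm_jde52} and Proposition~\ref{Prop_esti_of_F_rgn_others}). The resulting outer integrand in $\xi\in\R^n$ is positive, which is what makes lower bounds possible. Both (i) and (ii) then reduce to the two-sided estimate \eqref{estmate_Ip} for $\Ic_n(\yg,\varepsilon^2 d^2)$. That estimate covers all of your subcases at once, and it is where the $n-1$ factors in (i) actually come from.
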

    \begin{remark}
    The estimate \eqref{Eq_1+a_gtr_epsl} can also be expressed in terms of \eqref{Eq_1+a_small}. In particular, if we put \( g = ( 0 , 0 ) \), then \(a = 0 \) and \eqref{Eq_1+a_gtr_epsl} encompasses the corresponding result for H-type groups.
    \end{remark}

    As a direct consequence of \eqref{Eq_expression_of_d}, we have the following equivalent expression for $d^2$:
    
    \begin{lemma}
    We have:
    \begin{equation}\label{Eq_esti_of_d}
                d(\ggg)^2 \sim R^2 \, \max\Big\{ \frac{1+a}{\varepsilon^2},  \ 1 \Big\} \sim R^2 \, \frac{ 1 + a + \varepsilon^2 }{ \varepsilon^2 }, \qquad \mbox{whenever $\ggg$ satisfy \(\varepsilon \le \delta_0\).}
            \end{equation}
    \end{lemma}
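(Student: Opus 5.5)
Since $\varepsilon\le\delta_0<\pi$, we have $|\theta|=\pi-\varepsilon\in[7\pi/8,\pi)$, and under (AN) we are in the second branch of \eqref{Eq_expression_of_d}. So the whole argument amounts to estimating
\[
d(\ggg)^2=\Big(\frac{|\theta|}{\sin|\theta|}\Big)^2 R^2\,(1-a\cos|\theta|)
\]
with $|\theta|$ close to $\pi$. We bound the two factors separately.

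\emph{The factor $|\theta|/\sin|\theta|$.} Since $\sin|\theta|=\sin\varepsilon$ and $\varepsilon\in(0,\pi/8]$, we have $\sin\varepsilon\sim\varepsilon$. Together with $|\theta|\sim\pi$ this gives $\big(|\theta|/\sin|\theta|\big)^2\sim\varepsilon^{-2}$.

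\emph{The factor $1-a\cos|\theta|$.} Write $\cos|\theta|=-\cos\varepsilon$, so that
\[
1-a\cos|\theta| = 1+a\cos\varepsilon = (1+a) - a\,(1-\cos\varepsilon).
\]
We have $1-\cos\varepsilon=2\sin^2(\varepsilon/2)\sim\varepsilon^2$ and $|a|\le1$, which gives the upper bound $1+a\cos\varepsilon\lesssim 1+a+\varepsilon^2$. For the lower bound we split according to the sign of $a$.
\begin{itemize}
\item If $a\le 0$, then $-a(1-\cos\varepsilon)\ge0$, and $1+a\ge 1+a\cos\varepsilon\cdot 0$ trivially; more usefully, $1+a\cos\varepsilon\ge 1+a\ge 0$. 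Moreover $1+a\cos\varepsilon\ge 1-\cos\varepsilon\gtrsim\varepsilon^2$. Hence $1+a\cos\varepsilon\gtrsim\max\{1+a,\varepsilon^2\}$.
\item If $a>0$, then $1+a\cos\varepsilon\ge1\gtrsim 1+a+\varepsilon^2$, because $1+a\le2$ and $\varepsilon^2\le\pi^2/64$.
\end{itemize}
In both cases $1-a\cos|\theta|\sim 1+a+\varepsilon^2$.

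\emph{Combining.} Multiplying the two estimates gives
\[
d(\ggg)^2\sim R^2\,\frac{1+a+\varepsilon^2}{\varepsilon^2}.
\]
Finally, $1+a+\varepsilon^2\sim\max\{1+a,\varepsilon^2\}$, which turns this into the first form $R^2\max\{(1+a)/\varepsilon^2,\,1\}$.

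There is no real obstacle here. The only point needing care is the lower bound on $1+a\cos\varepsilon$ near $a=-1$, where the $\varepsilon^2$ contribution must come from $1-\cos\varepsilon$; the sign split on $a$ handles this.
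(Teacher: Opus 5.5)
Your proof is correct and is the same as the paper's: the paper states the lemma as a direct consequence of the explicit formula \eqref{Eq_expression_of_d}, and you carry out that computation, using $\sin|\theta|=\sin\varepsilon\sim\varepsilon$ and $1-a\cos|\theta|=1+a\cos\varepsilon\sim 1+a+\varepsilon^2$. One small fix: in the case $a\le 0$, delete the phrase ``$1+a\ge 1+a\cos\varepsilon\cdot 0$ trivially'', which as written asserts $a\ge 0$ and is false there; your argument only needs $1+a\cos\varepsilon\ge 1+a$ and $1+a\cos\varepsilon\ge 1-\cos\varepsilon$, which both hold.
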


    The proof of Theorem \ref{Thm_est_of_p_epsl_ll_1} is postponed to the section \ref{Sn5}. Combining this fact with Theorem \ref{Thm_est_of_p_epsl_gtr_1} as well as the classical Li-Yau estimates (cf. \eqref{ineq_upbd} and \eqref{ineq_lowerbd}), Theorem \ref{thm_main} can be explained easily by \eqref{Eq_esti_of_d}. In particular, the following simple observation will be used: it holds that $|\yg| \sim \beta \, \sqrt{\varepsilon} \, d(\ggg)$ 
    provided $ \varepsilon \le \delta_0$ 
    (cf. \eqref{ygf}).

\subsection{Preliminaries}

When $\varepsilon \le \delta_0$, both the phase and the amplitude become singular as $|\theta| \to \pi$. To isolate the singular part, we rewrite the heat kernel using the Fourier transform of Gaussian functions, following the strategy of \cite{LZ2025}. To this end, we introduce 

    \begin{gather}
        \wtff( \ra ; \lambda ) \coloneqq R^2 \left( \pa( | \lambda | )- \frac{ 2 \,  \lambda \cdot \lambda }{  (\lambda \cdot \lambda) + \pi^2 } \, (1 + a)\right), \label{Eq_def_of_wtff} \\
        \bVt( \lambda ) \coloneqq \left( 1 + \frac{  \lambda \cdot \lambda }{ \pi ^ 2 } \right)^{ \frac n 2 } \cdot \bV( \lambda ) = \prod_{ k \geq 2} \left( 1 + \frac{  \lambda \cdot \lambda }{ k ^ 2 \, \pi ^ 2 } \right) ^ { - \frac n 2 }. \label{Eq_def_of_V2}
    \end{gather} 
    We further define $\ff( \ra ; \lambda ) := \wtff( \ra ; i \lambda )$. 
      It follows from \eqref{def_psi} that:
    \begin{equation}\label{Eq_definition_of_f(Ra;r)}
        \ff(\ra;\lambda) = R^2 \, (1 -a ) - 2 \, R^2 \, \sum_{k\geq 2} \frac{\lambda \cdot \lambda}{k^2\,\pi^2 - \lambda \cdot \lambda } \left( 1 - a \, (-1)^k \right),
    \end{equation}
    and thus $\wtff(\ra;\lambda)$ (also $\bVt( \lambda )$) can be extended to a holomorphic function on 
    \(\R^{n'} + i B_{\R^{n'}}(0,2\pi)\). 
     As a consequence, a direct computation yields the following lemma.
   \begin{lemma} \label{lemma_hess}
       It holds that: 
       \begin{gather}
           -\hess _\vt \, \ff(\ra;\vt) \sim  R^2 \, \I_{n'}, \qquad \forall \, \vt \in B_{\R^{n'}} \left(0,  \frac{15}{8} \pi \right), \  R \ge 0, \ -1 \le a \le 1; \label{48fn} \\
           \Re \Big( \ff(\ra;\vt - i \lambda) - \ff(\ra;\vt) \Big) \gtrsim \frac{|\lambda|^2}{ 1 + |\lambda| } \, R^2, \quad \forall \, \vt \in B_{\R^{n'}} (0,  2 \pi ), \  R \ge 0, \ -1 \le a \le 1; \label{410f} \\
           |\bVt(\lambda + i\vt) | \leq \bVt(\lambda) \cdot \bVt(i\vt), \qquad \forall \vt \in B_{\R^{n'}} (0, 2 \pi ), \  \lambda \in \R^{n'}. \label{49f}
       \end{gather}
   \end{lemma}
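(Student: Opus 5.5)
The three assertions of Lemma \ref{lemma_hess} concern only the explicit series \eqref{Eq_definition_of_f(Ra;r)} and the product \eqref{Eq_def_of_V2}. I would prove each one by working term by term in $k \ge 2$.

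Throughout, write $s = \lambda\cdot\lambda$, $c_k = 1 - a(-1)^k \in [0,2]$ and $\varphi_k(s) = \frac{s}{k^2\pi^2 - s} = -1 + \frac{k^2\pi^2}{k^2\pi^2 - s}$. Then $\ff(\ra;\vt) = R^2(1-a) - 2R^2\sum_{k\ge2} c_k\,\varphi_k(\vt\cdot\vt)$. The key structural remark is that even $k$ carry weight $1-a$ and odd $k$ carry weight $1+a$. Since $(1-a)+(1+a) = 2$, at least one of the two parity classes $\mathcal K$ has $c_k \ge 1$ for every $k \in \mathcal K$.

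\textbf{Step 1: proof of \eqref{48fn}.} For real $\vt$ with $|\vt| < \frac{15}{8}\pi$, the chain rule gives
\[
-\hess_\vt\, \ff = 2R^2\sum_{k\ge2} c_k\bigl(2\varphi_k'(s)\,\I_{n'} + 4\varphi_k''(s)\,\vt\vt\t\bigr), \qquad s = |\vt|^2 ,
\]
where $\varphi_k' = \frac{k^2\pi^2}{(k^2\pi^2-s)^2} > 0$ and $\varphi_k'' > 0$.

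For the upper bound, use $k^2\pi^2 - s \ge 4\pi^2 - \frac{225}{64}\pi^2 \gtrsim 1$ and $k^2\pi^2 - s \gtrsim k^2$ for all $k \ge 2$. Then $\sum_k \varphi_k'$ and $\sum_k \varphi_k''\,|\vt|^2$ are bounded by an absolute constant.

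For the lower bound, drop the positive semidefinite rank-one terms. Keep only the terms $k \in \mathcal K$ with $k \le 3$, for which $c_k \ge 1$ and $\varphi_k' \ge (k\pi)^{-2}$. This gives $-\hess_\vt\,\ff \gtrsim R^2\,\I_{n'}$.

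\textbf{Step 2: proof of \eqref{410f}.} Put $A_k = k^2\pi^2 - |\vt|^2$, which is positive and at most $k^2\pi^2$ when $|\vt| < 2\pi$. Then
\[
k^2\pi^2 - (\vt - i\lambda)\cdot(\vt - i\lambda) = A_k + |\lambda|^2 + 2i\,\vt\cdot\lambda .
\]
Since $\Re\frac{1}{B + iC} = \frac{B}{B^2 + C^2} \le \frac1B$ for $B > 0$, we get
\[
\Re\,\varphi_k\bigl((\vt - i\lambda)\cdot(\vt - i\lambda)\bigr) - \varphi_k(|\vt|^2) \le k^2\pi^2\Bigl(\frac{1}{A_k + |\lambda|^2} - \frac{1}{A_k}\Bigr) = -\frac{k^2\pi^2\,|\lambda|^2}{A_k\,(A_k + |\lambda|^2)} .
\]
Hence, using $A_k \le k^2\pi^2$,
\[
\Re\bigl(\ff(\ra;\vt - i\lambda) - \ff(\ra;\vt)\bigr) \ge 2R^2\sum_{k\ge2} c_k\,\frac{k^2\pi^2\,|\lambda|^2}{A_k\,(A_k + |\lambda|^2)} \ge 2R^2\sum_{k\in\mathcal K}\frac{|\lambda|^2}{k^2\pi^2 + |\lambda|^2} .
\]
This is the step I expect to be the main obstacle, because a single term only gives $\min\{|\lambda|^2, 1\}$. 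The linear growth in $|\lambda|$ has to come from summing the whole parity class. Indeed, for $|\lambda| \ge 1$ there are $\gtrsim |\lambda|$ indices $k \in \mathcal K$ with $k\pi \le |\lambda|$, and each contributes at least $\frac12$. For $|\lambda| \le 1$ the first term alone gives $\gtrsim |\lambda|^2$. Together, $\sum_{k\in\mathcal K}\frac{|\lambda|^2}{k^2\pi^2 + |\lambda|^2} \gtrsim \frac{|\lambda|^2}{1 + |\lambda|}$, which is \eqref{410f}.

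\textbf{Step 3: proof of \eqref{49f}.} For each $k \ge 2$, taking the real part of the numerator gives
\[
\Bigl|1 + \frac{(\lambda + i\vt)\cdot(\lambda + i\vt)}{k^2\pi^2}\Bigr| \ge \frac{k^2\pi^2 + |\lambda|^2 - |\vt|^2}{k^2\pi^2} = \Bigl(1 + \frac{|\lambda|^2}{k^2\pi^2}\Bigr)\Bigl(1 - \frac{|\vt|^2}{k^2\pi^2}\Bigr) + \frac{|\lambda|^2|\vt|^2}{k^4\pi^4} .
\]
The last term is nonnegative, and both factors of the product are positive since $|\vt| < 2\pi \le k\pi$. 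Raising to the power $-\frac n2$ and taking the product over $k \ge 2$ gives $|\bVt(\lambda + i\vt)| \le \bVt(\lambda)\,\bVt(i\vt)$, because $\bVt(i\vt) = \prod_{k\ge2}\bigl(1 - \frac{|\vt|^2}{k^2\pi^2}\bigr)^{-\frac n2}$.
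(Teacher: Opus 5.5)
Your proof is correct and follows the same approach as the paper: it is the term-by-term ``direct computation'' from \eqref{Eq_definition_of_f(Ra;r)} and \eqref{Eq_def_of_V2} that the paper asserts without writing out. There is one small slip in Step 2: for $1 \le |\lambda| < 3\pi$ there may be no index $k \in \mathcal K$ with $k\pi \le |\lambda|$, so the claim of ``$\gtrsim |\lambda|$ such indices'' fails there. On that bounded range, however, the first term of the parity class alone gives $\gtrsim 1 \sim \frac{|\lambda|^2}{1+|\lambda|}$, so \eqref{410f} still holds.
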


Recall the assumption (AN), with $\theta$ defined as in \eqref{Eq_expression_of_d}. Denote in the sequel for $\xi \in \R^n$ and $\lambda \in \R^{n'}$ that:
        \begin{gather}
            \Wg  \coloneq \frac{|\yg|^2 - |\xi|^2}{\pi^2 - |\theta|^2} \, \theta,\quad \Ag  \coloneq \frac{|\xi|^2}{\pi ^2 - |\theta|^2} \, \I_{n'}- \frac{1}{4} \, \hess_\theta \, \ff(\ra;\theta), \label{411fn}\\
            \Sg(\lambda) \coloneq \ff( \ra ; \theta - i \lambda) - \ff(\ra;\theta) +  i \, \lambda \cdot \nabla_\theta \, \ff ( \ra ; \theta ) + \frac{1}{2} \, \lambda \t \, \hess_\theta \, \ff( \ra ; \theta ) \, \lambda, \label{Eq_def_of_S()} \\
         \cpi \coloneq \frac{1}{(2\pi)^{\frac{n}{2}} \, (4\pi)^{n' + \frac{n}{2}}}, \quad   \Sg(\xi;\lambda) \coloneq \Sg(\lambda) + 2 \, \lambda\t \, \Ag \, \lambda - 4 \, i\,\lambda \cdot \Wg . \label{413nf}
        \end{gather}

The following Laplace-type integral expression for \( p \) can be considered as a special instance of \cite[Theorem~5.1]{LZ2025}:

    \begin{proposition}\label{Thm_transform}
        It holds that: 
        \begin{equation}\label{Eq_transformed_2_of_p}
            p(\ggg) = \cpi \, e^{-\frac{d(\ggg)^2}{4}} \left(1 - \frac{|\theta|^2}{\pi^2}\right)^{- \frac{n}{2} } \int_{\R^n} e^{ -\frac{1}{2} |\xi - \yg|^2 } \, \F(\ggg;\xi) \d \xi,
        \end{equation}
        where
         \begin{align}\label{Eq_def_of_F} 
             \F(\ggg;\xi) \coloneq \int_{\R^{n'}}\bVt(\lambda + i \theta) \, \exp \left( - \frac{1}{4} \, \Sg(\xi;\lambda)\right) \d \lambda .
        \end{align} 
    \end{proposition}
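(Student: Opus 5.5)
The plan is to start from \eqref{Eq_def_of_p} with $h=1$ and carry out three operations on the $\lambda$-integral. First, shift the contour $\lambda \mapsto \lambda + i\theta$. Second, peel off the singular factor coming from the first pole $\lambda\cdot\lambda = -\pi^2$ and write it as an $n$-dimensional Gaussian integral in $\xi$. Third, use the variational characterization \eqref{Eq_expression_of_d} of $\theta$ to identify the constant term as $d(\ggg)^2$ and the residual linear and quadratic terms as $\Wg$, $\Ag$ and $\Sg(\lambda)$.

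\textbf{Step 1 (contour shift).} The integrand $\bV(\lambda)\exp\{-\frac14\wtreffun(\ggg;\lambda)\}$ is holomorphic on $\R^{n'}+iB_{\R^{n'}}(0,\pi)$, and $|\theta|<\pi$ by (AN). Applying Cauchy's theorem coordinatewise, we replace $\R^{n'}$ by $\R^{n'}+i\theta$. The boundary terms vanish by the decay of $\bV$ along horizontal lines and the growth of $\Re\wtreffun$, which follow from \eqref{49f}, \eqref{410f} and the decomposition below. Next we use the two identities
\[
\wtreffun(\ggg;\mu)=\wtff(\ra;\mu)+\frac{2R^2(1+a)\,\mu\cdot\mu}{\mu\cdot\mu+\pi^2}-2i(u-u')\cdot\mu,
\qquad
\bV(\mu)=\Big(1+\frac{\mu\cdot\mu}{\pi^2}\Big)^{-\frac n2}\bVt(\mu),
\]
together with $R^2(1+a)=|\xg|^2$ and $\frac{2\mu\cdot\mu}{\mu\cdot\mu+\pi^2}=2-\frac{2\pi^2}{\mu\cdot\mu+\pi^2}$. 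After the shift, the only non-entire piece is
\[
\Big(\tfrac{q}{\pi^2}\Big)^{-\frac n2}\exp\Big(\frac{\pi^2|\xg|^2}{2q}\Big),
\qquad q=\pi^2+\mu\cdot\mu=(\pi^2-|\theta|^2)+|\lambda|^2+2i\lambda\cdot\theta,
\]
and on this contour $\Re q\ge \pi^2-|\theta|^2>0$.

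\textbf{Step 2 (Gaussian representation).} For $\Re q>0$ we have the identity
\[
\Big(\tfrac{q}{\pi^2}\Big)^{-\frac n2}e^{\frac{\pi^2|\xg|^2}{2q}}=(2\pi)^{-\frac n2}\int_{\R^n}\exp\Big(-\frac{q|\zeta|^2}{2\pi^2}+\xg\cdot\zeta\Big)\d\zeta,
\]
which is first checked for real $q>0$ by completing the square and then extended by analytic continuation. We rescale $\zeta=\pi(\pi^2-|\theta|^2)^{-1/2}\xi$. This produces the factor $(1-|\theta|^2/\pi^2)^{-n/2}$ and the constant $\cpi$, and the exponent becomes
\[
-\tfrac12|\xi|^2+\yg\cdot\xi-\frac{|\xi|^2(|\lambda|^2+2i\lambda\cdot\theta)}{2(\pi^2-|\theta|^2)}.
\]
Completing the square in $\xi$ gives $e^{-\frac12|\xi-\yg|^2}e^{\frac12|\yg|^2}$. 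Writing $|\lambda|^2=\lambda\t\I_{n'}\lambda$, the $\lambda$-dependent part splits as $-\frac14\big(2\lambda\t\frac{|\xi|^2}{\pi^2-|\theta|^2}\I_{n'}\lambda\big)$ plus an imaginary linear term $-\frac{i|\xi|^2}{\pi^2-|\theta|^2}\lambda\cdot\theta$.

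\textbf{Step 3 (identification).} We Taylor-expand $\wtff(\ra;\theta$-shifted$)$ to second order at $i\theta$ (equivalently $\ff$ at $\theta$ after $\lambda\mapsto -i\lambda$). This produces $\ff(\ra;\theta)$, a gradient term, $-\frac12\lambda\t\hess_\theta\ff\,\lambda$ and the remainder $\Sg(\lambda)$ as defined in \eqref{Eq_def_of_S()}.

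The constant terms are $-\frac14\ff(\ra;\theta)$, the $-\frac12|\xg|^2$-type constants from Step 1, $\frac12|\yg|^2$, and $-\frac12(u-u')\cdot\theta$. These should sum to $-\frac14\wtreffun(\ggg;i\theta)=-\frac{d(\ggg)^2}{4}$ by \eqref{Eq_expression_of_d}, since $\theta$ is the maximizer. The linear terms in $\lambda$ are $\nabla_\theta\ff(\ra;\theta)$, $2(u-u')$, the derivative of the singular part (which carries $|\yg|^2\theta/(\pi^2-|\theta|^2)$), and the $|\xi|^2\theta$ term. Using the first-order condition $\nabla_\vt\wtreffun(\ggg;i\vt)|_{\vt=\theta}=0$, these should collapse exactly to $-4i\lambda\cdot\Wg$. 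The quadratic terms combine into $2\lambda\t\Ag\lambda$. Finally, Fubini between the $\xi$ and $\lambda$ integrals is justified by absolute integrability: \eqref{49f} bounds $\bVt(\lambda+i\theta)$, \eqref{410f} gives $\Re\,$ of the exponent $\gtrsim |\lambda|^2R^2/(1+|\lambda|)$, and the term $|\xi|^2|\lambda|^2$ has a favourable sign.

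I expect the main obstacle to be Step 3. The bookkeeping must show that all constants reduce precisely to $d(\ggg)^2$ and all linear terms to $\Wg$. This relies on using the first-order condition for $\theta$ in the right form, including the singular contribution's derivative. The degenerate configuration $x=-x'$ is excluded here by $\xg\neq0$ in (AN), so the maximizer is interior and this condition is available.
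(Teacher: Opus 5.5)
Your proposal is correct and is essentially the paper's proof. It uses the same Gaussian representation of $(1+\lambda\cdot\lambda/\pi^2)^{-n/2}$, the same shift to $\R^{n'}+i\theta$, the same rescaling by $(1-|\theta|^2/\pi^2)^{-1/2}$, and the same two identities coming from $d(\ggg)^2=\wtreffun(\ggg;i\theta)$ and the first-order condition at $\theta$ (cf.\ \eqref{eq_of_nable_f}). The only difference is the order of operations: the paper applies the Gaussian representation and Fubini first and then shifts the inner integral, whose integrand is by then holomorphic on $\R^{n'}+iB_{\R^{n'}}(0,2\pi)$, whereas you shift first inside $\R^{n'}+iB_{\R^{n'}}(0,\pi)$ and continue the Gaussian identity analytically to $\Re q>0$, which works equally well. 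One bookkeeping caution for Step 3: once the singular factor has been absorbed into the Gaussian, it leaves no separate linear term, so the linear terms are only $\nabla_\theta\ff(\ra;\theta)$, $2(u-u')$ and the $|\xi|^2\theta$ term, and the $|\yg|^2\theta/(\pi^2-|\theta|^2)$ contribution appears only when you substitute \eqref{eq_of_nable_f} for $\nabla_\theta\ff(\ra;\theta)+2(u-u')$ (listing it as an extra term would double count and cancel it).
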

    \begin{proof}
    For the reader's convenience, we provide a concise proof. First, notice that:
     \begin{align*}
                \left(1 + \frac{|\lambda|^2}{\pi^2}\right)^{-\frac{n}{2}} = (2\pi)^{-\frac{n}{2}} \exp\left( - \frac{ \pi^2 \, |\xg|^2 }{ 2 \, ( |\lambda|^2 + \pi^2 ) } \right)  \int_{\R^n}  \exp \left(-\frac{1}{2}\left(1 +\frac{|\lambda|^2}{\pi^2} \right)|s|^2 + \xg \cdot s\right)\d s .
        \end{align*}
    
    Substituting this into \eqref{Eq_def_of_p}, together with the definition of \(\bVt\) and \(\wtff\) (cf. \eqref{Eq_def_of_V2} and \eqref{Eq_def_of_wtff}), 
    it follows from $|\xg|^2 = R^2 \, (1 + a)$ that:
        \begin{align} \label{Eq_transformed_1_of_p}
            p(\ggg) &= \cpi \int_{\R^{n'}} \bVt(\lambda) \, e^{-\frac{1}{4} \left( \wtff( \ra ; \lambda ) - 2 \, i \, (u - u')\cdot \lambda \right)} \left(\int_{\R^n} \!\! e^{ -\frac{1}{2} |s-\xg|^2-\frac{ (\lambda \cdot \lambda) \, |s|^2}{ 2 \, \pi^2 }} \d s\right) \d \lambda \nonumber \\
            &= \cpi \int_{\R^n} \!\! e^{ -\frac{1}{2} |s-\xg|^2}  \d s \int_{\R^{n'}} \bVt(\lambda) \, e^{-\frac{1}{4} \left( \wtff( \ra ; \lambda ) - 2 \, i \, (u - u')\cdot \lambda + 2 \frac{ (\lambda \cdot \lambda) \, |s|^2}{\pi^2 }\right)} \d \lambda.
        \end{align}
        Let $\mathbf{E}$ denote the inner integral. Observe that its integrand is holomorphic in \(\R^{n'} + i B_{\R^{n'}} (0, 2 \pi)  \subset \mathbb{C}^{n'}\). From \eqref{410f} and \eqref{49f}, we can deform the contour from \(\R^{n'}\) to \(\R^{n'} + i\theta\) to get 
        \begin{align*}
        \mathbf{E} = \int_{\R^{n'}} \bVt(\lambda + i \theta) \, \exp\left\{ -\frac{1}{4} \left(\ff(\ra;\theta-i\lambda) - 2 \, i \, (u-u') \cdot(\lambda + i\theta)+ \frac{ 2 \, \langle \lambda + i \theta \rangle^2  }{ \pi^2 } |s|^2 \right) \right\} \d \lambda,
        \end{align*}
    where $\langle z = (z_1, \ldots, z_{n'}) \rangle^2 := \sum_{j = 1}^{n'} z_j \cdot z_j$ provided $z \in \C^{n'}$.

    Next, recall that $\yg = \left(1 - \frac{|\theta|^2}{\pi^2}\right)^{-1/2} \xg$. By the fact that $d(\ggg)^2 = \reffun(\ggg;\theta)$ and $\nabla_\theta \, \reffun \, (\ggg;\theta) = 0$, we obtain that:
        \begin{gather}
        d(\ggg)^2 = \ff(\ra; \theta) - 2 \, |\yg|^2 + 2 \, |\xg|^2  + 2 \, (u-u')\cdot \theta, \label{eq_of_d^2} \\
            \nabla_\theta \, \ff(\ra;\theta) + 2 \, (u-u') =  \frac{4 \, \pi^2 \, |\xg|^2 }{\left(\pi^2 - |\theta|^2\right)^2} \, \theta = 4 \, \frac{|\yg|^2}{\pi^2 - |\theta|^2} \, \theta. \label{eq_of_nable_f} 
        \end{gather}
        Therefore, a direct calculation shows that the phase of $\mathbf{E}$ equals to 
    \[
    -\frac{1}{4} \left\{ d(\ggg)^2 + \Sg\left((1- \frac{ |\theta| ^2}{ \pi ^2})^{\frac{1}{2}} \, s; \lambda\right) + 2 \, |\yg|^2 - 2 \, |\xg|^2 - 2 \,  |s|^2 + 2 \, \frac{\pi^2 - |\theta|^2}{\pi^2} |s|^2 \right\}.
    \]
    This together with \eqref{Eq_transformed_1_of_p}, the change of variables \(s = \left(1- \frac{ |\theta| ^2}{ \pi ^2}\right)^{-\frac{1}{2}} \xi \) implies \eqref{Eq_transformed_2_of_p}.
   \end{proof}

\subsection{Estimates of $\F(\ggg;\xi)$} 
   
 To derive uniform heat kernel asymptotics at infinity for $(\ggg)$ satisfying $0 < \varepsilon \le \delta_0$, we first need to  investigate the properties of $\F(\ggg;\xi)$. The following asymptotic behaviour of $\F(\ggg;\xi)$ is indeed a specialization of \cite[Theorem~5.2]{LZ2025} to the present setting,
which is based on a perturbation argument closely related to the method of stationary phase.

    \begin{theorem} \label{thm_jde52} 
        Define 
        \begin{align} \label{EgEf}
          \Lgg \coloneq R^2 + \frac{|\xi|^2}{\varepsilon}, \qquad  \EgE := \Wg \t \, \Ag^{-1} \, \Wg, \qquad \forall \, \xi \in \R^n.   
        \end{align}
        Let $\vpo > 0$. There exists a sufficiently large constant $C(\vpo) > 0 $ such that
        \begin{equation}
        \F (\ggg , \xi) = (2\pi)^{\frac{n'}{2}} \, \bVt(i\theta)\,  \frac{\exp \left( - \frac{1}{2} \EgE \right)}{ \sqrt{ \det \Ag  } } \left(1 + \O_{\vpo}(\Lgg^{-1/8})\right),
        \end{equation}
        provided $\Lgg \ge C (\vpo)$ and $\EgE \le \vpo \, \Lgg ^{\frac 1 4}$.
    \end{theorem}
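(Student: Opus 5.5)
The plan is to treat $\F(\ggg;\xi)$ as a Gaussian integral in $\lambda$ plus a perturbation. We complete the square in the quadratic part of $\Sg(\xi;\lambda)$ and then shift the contour to the stationary point. The exponent is
\[
-\tfrac14\Sg(\xi;\lambda) = -\tfrac14\Sg(\lambda) - \tfrac12 \lambda\t\Ag\lambda + i\lambda\cdot\Wg .
\]
The purely quadratic part has critical point $\lambda_0 = i\,\Ag^{-1}\Wg$. Substituting $\lambda = \lambda' + \lambda_0$ gives
\[
-\tfrac12\lambda'^{\mathrm T}\Ag\lambda' - \tfrac12\EgE .
\]
Since the integrand is holomorphic in $\R^{n'} + i B(0,2\pi-|\theta|)$, we move the contour to $\R^{n'} + \Im\lambda_0$, provided $|\Ag^{-1}\Wg|$ is small. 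Decay at infinity along the shift comes from \eqref{410f} and \eqref{49f}.

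\textbf{Size of the shift.} By \eqref{48fn}, $\Ag \sim (R^2 + |\xi|^2/\varepsilon)\,\I_{n'} \sim \Lgg\,\I_{n'}$, using $\pi^2-|\theta|^2\sim\varepsilon$. Hence
\[
|\Ag^{-1}\Wg| \lesssim \sqrt{\EgE/\Lgg} \le \sqrt{\vpo}\,\Lgg^{-3/8},
\]
which is small for $\Lgg \ge C(\vpo)$. So the shift stays inside the holomorphy domain, well away from $|\Im| = \pi$ relative to $\theta$.

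\textbf{Splitting the integral.} On the new contour we split $\lambda'$ into $|\lambda'| \le \Lgg^{-3/8}$ and its complement.
\begin{itemize}
\item On the inner region, $\Sg(\lambda)$ is a Taylor remainder of order three, so $|\Sg(\lambda)| \lesssim R^2|\lambda|^3 \lesssim \Lgg^{-1/8}$. This uses derivative bounds on $\ff$ from its holomorphy on a ball of radius about $\tfrac{15}{8}\pi$. Also $\bVt(\lambda+i\theta) = \bVt(i\theta)(1+O(|\lambda|))$. Thus the integrand equals the Gaussian $e^{-\frac12\lambda'^{\mathrm T}\Ag\lambda'}$ times $\bVt(i\theta)e^{-\frac12\EgE}(1+O(\Lgg^{-1/8}))$.
\item Integrating the Gaussian gives $(2\pi)^{n'/2}(\det\Ag)^{-1/2}$, with exponentially small tail outside the inner ball, because $\Lgg\cdot\Lgg^{-3/4} = \Lgg^{1/4}\to\infty$.
\item On the outer region, we bound the real part of the full exponent below. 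We use \eqref{410f}, $\Re\big(\ff(\theta - i\lambda)-\ff(\theta)\big)\gtrsim R^2|\lambda|^2/(1+|\lambda|)$, together with the positive contribution $|\xi|^2|\lambda|^2/(\pi^2-|\theta|^2)$ and the cross terms from the imaginary shift. The shift is $O(\Lgg^{-3/8})$, so the cross terms cost at most $O(\Lgg^{1/4}+\EgE)$. We then compare with $\Lgg^{1/4}$ via the hypothesis $\EgE\le\vpo\Lgg^{1/4}$, and bound $\bVt$ by \eqref{49f}.
\end{itemize}

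\textbf{Main obstacle.} The hard step is the outer-region bound when $R^2$ is small compared with $|\xi|^2/\varepsilon$, i.e. when the coercivity of the $\ff$ part is weak. Only the quadratic $|\xi|^2/\varepsilon$ term controls large $|\lambda|$ there, and that term is not integrable against a merely linear growth bound. I would first use it to get Gaussian decay $e^{-c\Lgg|\lambda'|^2}$ for $|\lambda'|\lesssim1$. For large $|\lambda'|$ I would then combine the still-quadratic term $|\xi|^2|\lambda|^2/\varepsilon$ with the polynomial decay of $\bVt$, which is integrable for $n\ge1$ only together with the Gaussian. The ratio of the outer region to the main term should then be $O(e^{-c\Lgg^{1/4}}) = O(\Lgg^{-1/8})$. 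The constants depend on $\vpo$ only through the allowed shift and the $\EgE$ absorption, which gives $\O_{\vpo}$.
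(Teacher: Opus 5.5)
Your route is the paper's: complete the square, shift the contour by $i\rho$ with $\rho=\Ag^{-1}\Wg$ and $|\rho|\lesssim\sqrt{\EgE/\Lgg}=O_{\vpo}(\Lgg^{-3/8})$, split at $|\lambda'|=\Lgg^{-3/8}$, and use the third-order Taylor bound plus the Laplace method on the inner ball. That part is fine. The gap is in the outer region, which is the step you leave vague.

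First, \eqref{410f} concerns $\ff(\ra;\vartheta-i\lambda)$ with $\vartheta,\lambda$ real. On the shifted contour $\theta-i(\lambda'+i\rho)=(\theta+\rho)-i\lambda'$, so it can only be applied at the base point $\theta+\rho$, not at $\theta$.

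Second, your bookkeeping (``the cross terms cost at most $O(\Lgg^{1/4}+\EgE)$, then compare with $\Lgg^{1/4}$ via $\EgE\le\vpo\Lgg^{1/4}$'') does not close. Near $|\lambda'|=\Lgg^{-3/8}$ the coercive gain is only $c\,\Lgg^{1/4}$ with a fixed small $c$, whereas the claimed loss is $C(1+\vpo)\Lgg^{1/4}$ with $\vpo$ arbitrary. Also, $e^{-\frac12\EgE}$ has already been extracted exactly and sits in the main term, so any extra factor $e^{C\EgE}$ relative to the main term is unaffordable. Finally, a loss of size $\Lgg^{1/4}$ near the boundary contradicts your own inner-region bound $|\Sg(\lambda'+i\rho)|\lesssim\Lgg^{-1/8}$.

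The missing observation is that there are no large cross terms at all. With $\rho=\Ag^{-1}\Wg$ one has the exact identity
\[
2(\lambda')^{\mathrm T}\Ag\lambda'+\Re\,\Sg(\lambda'+i\rho)=\Re\big(\ff(\ra;\theta+\rho-i\lambda')-\ff(\ra;\theta+\rho)\big)+\frac{2|\xi|^2}{\pi^2-|\theta|^2}\,|\lambda'|^2+\Sg(i\rho).
\]
Here all linear and mixed terms cancel. The only leftover, $\Sg(i\rho)=\ff(\ra;\theta+\rho)-\ff(\ra;\theta)-\rho\cdot\nabla_\theta\ff(\ra;\theta)-\frac12\rho^{\mathrm T}\hess_\theta\ff(\ra;\theta)\,\rho$, is real and $O(R^2|\rho|^3)=O_{\vpo}(\Lgg^{-1/8})$.

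Apply \eqref{410f} at $\theta+\rho\in B_{\R^{n'}}(0,2\pi)$. On $|\lambda'|\ge\Lgg^{-3/8}$ this gives the lower bound $\gtrsim R^2|\lambda'|^2/(1+|\lambda'|)+|\xi|^2|\lambda'|^2/\varepsilon\gtrsim\Lgg^{1/4}$, with a constant independent of $\vpo$. Hence the outer piece is $\lesssim e^{-\frac12\EgE}e^{-c\Lgg^{1/4}}$, which is $O(\Lgg^{-1/8})$ times the main term $\sim e^{-\frac12\EgE}\Lgg^{-n'/2}$.

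Your ``main obstacle'' is not one. When $R^2\ll|\xi|^2/\varepsilon$, the term $|\xi|^2|\lambda'|^2/\varepsilon$ gives Gaussian decay, which is integrable. When $R^2$ dominates, \eqref{410f} gives $e^{-cR^2|\lambda'|}$ for $|\lambda'|\ge1$. From the amplitude you only need $|\bVt(\lambda'+i(\theta+\rho))|\le\bVt(\lambda')\,\bVt(i(\theta+\rho))\lesssim1$, by \eqref{49f}; in any case $\bVt$ decays exponentially, not polynomially.
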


    \begin{proof}
        For the sake of completeness, we sketch the proof below.

First of all, it follows from \eqref{411fn} and \eqref{48fn} that 
\begin{align} \label{421nf} 
  \Ag \sim \Lgg \, \I_{n'}, \qquad \forall \, \xi \in \R^n.  
\end{align}
Set \[ \rg  = \Ag ^{-1} \, \Wg. \] With the restriction on $\xi$, it holds that $| \rg | \sim \sqrt{\EgE/\Lgg} = \O _{\vpo} (\Lgg ^{ - \frac 3 8 }) \ll 1$. From \eqref{410f}-\eqref{49f}, we may lift the contour in \eqref{Eq_def_of_F} by \(i \rg\), and get
\[
\F(\ggg;\xi)  = \int_{\R^{n'}}\bVt(\lambda + i (\theta + \rg )) \exp \left(-\frac{1}{4} \, \Sg(\xi;\lambda + i \rg)\right) \d \lambda. 
\]
A simple calculation gives that:
\begin{align}
\Sg(\xi;\lambda + i\rg ) = \Sg(\lambda + i \rg) + 2 \, \lambda\t \, \Ag \, \lambda + 2 \, \EgE .
\end{align}
Then we can write
\begin{align}
e^{\frac{\EgE}{2}} \F(\ggg;\xi)  =  \int_{\R^{n'}} \! \bVt(\lambda + i (\theta + \rg )) \,  e^{-\frac{1}{4} \Sg(\lambda + i \rg)}  \, e^{-\frac{1}{2} \lambda\t \, \Ag \, \lambda} \, d\lambda.
\end{align}

Next, we divide $\R^{n'}$ into $\DD_1 := \{ | \lambda | < \Lgg ^{ - \frac 3 8 } \}$ and $\DD_2 := \{ | \lambda | \ge \Lgg ^{ - \frac 3 8 }\}$, and denote the above integral taken over $\DD_j$ by $\widetilde{\F}_j(\ggg; \xi)$ ($j = 1, 2$). Let us start from the estimate of $\widetilde{\F}_1(\ggg;\xi)$. On \(\{ | \lambda | < \Lgg ^{ - \frac 3 8 } \}\), it follows from Taylor's formulas that:
       \[ \Sg(\lambda + i \rg)  = R^2 \, \O(|\lambda + i \rg|^3) = \O _{\vpo} \left( \Lgg^{-1/8} \right).\]
       Together with the observation that
       \[\bVt(\lambda + i(\theta + \rg)) = \bVt(i \theta) \left(1 + \O_{\vpo} ( \Lgg ^{-3/8})\right),\]
the standard Laplace method implies that     
        \begin{align}
        \widetilde{\F}_1(\ggg;\xi) & = \bVt(i\theta)  \int_{ \{|\lambda| < \Lgg^{ - 3/8 } \} } \exp \left(-\frac{1}{2} \, \lambda\t \, \Ag \, \lambda \right) \left(1 + \O_{\vpo} ( \Lgg ^{- \frac 1 8})\right) \d \lambda \nonumber \\
        & = \frac{(2\pi)^{n'} \, \bVt(i\theta)}{\sqrt{\det\Ag }} \, \left(1 + \O_{\vpo} ( \Lgg ^{-\frac 1 8 })\right) \ (\sim \Lgg ^{-\frac{n'}{2} }).
       \end{align}

We now estimate the remainder term \(\widetilde{\F}_2(\ggg;\xi)\). A direct calculation shows that:  
       \begin{align} \label{424nf}
          & 2 \lambda\t \, \Ag \, \lambda + \Re\Big( \Sg(\lambda + i \rg) \Big) \nonumber \\
          &= \, \Re \Big( \ff(\ra \, ;\theta + \rg - i \lambda) - \ff(\ra \, ;\theta + \rg) \Big)  + 2 \frac{|\xi|^2}{\pi^2 - |\theta|^2} |\lambda|^2  + \Sg(i \rg) \nonumber \\
          &\gtrsim  R^2 \frac{|\lambda|^2}{1 + |\lambda|} + \frac{|\xi|^2}{\varepsilon} |\lambda|^2 +  \Sg(i \rg) \nonumber \\
          &\gtrsim \Lgg^{ \frac 1 4 }, \qquad \forall \, |\lambda | \ge \Lgg^{ - \frac 3 8 },
       \end{align}
where we have used \eqref{410f} in the first ``$\gtrsim$'', and the fact that $|\Sg(i \rg)| \ll 1$ by the second-order Taylor expansion in the last inequality. Then it follows from \eqref{49f} that $|\widetilde{\F}_2(\ggg;\xi)| \lesssim \exp\{-c \Lgg^{ \frac 1 4 }\}$.
Combining the preceding estimates completes the proof of Theorem \ref{thm_jde52}. 
    \end{proof}

The following upper bounds of $\F(\ggg;\xi)$ can be considered as a simple instance of \cite[Proposition~5.1]{LZ2025}:

    \begin{proposition}\label{Prop_esti_of_F_rgn_others} 
        There exists a constant \(c > 0\) such that
        \begin{equation}
            |\F(\ggg;\xi)| \lesssim\exp \left\{-c \,  |\Wg | \, ( \Lgg  + 1 )^{-\frac{1}{2}}\right\}, \qquad \forall \, \xi \in \R^n.
        \end{equation}
    \end{proposition}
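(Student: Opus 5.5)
We will prove the bound by the same contour shift used in the proof of Theorem \ref{thm_jde52}. This time the shift is chosen in the direction of $\Wg$, with a size adapted to $\Lgg + 1$ rather than to the stationary point.

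\textbf{Trivial regime.} If $|\Wg| \lesssim (\Lgg+1)^{1/2}$, the claimed bound is just $|\F(\ggg;\xi)| \lesssim 1$. This follows by putting absolute values inside \eqref{Eq_def_of_F} and using three facts.
\begin{itemize}
\item By \eqref{49f}, $|\bVt(\lambda+i\theta)| \le \bVt(\lambda)\,\bVt(i\theta)$, and $\bVt(i\theta)$ is bounded since $|\theta|<\pi$.
\item $\Re \Sg(\xi;\lambda) \ge \Re\big(\ff(\ra;\theta-i\lambda)-\ff(\ra;\theta)\big) + 2\,\lambda\t\,\mathrm{diag}\big(|\xi|^2/(\pi^2-|\theta|^2)\big)\lambda \ge 0$. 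The first-order term is purely imaginary, and $-\tfrac12\lambda\t\hess_\theta\ff\,\lambda$ is cancelled by the $\hess$ part of $\Ag$. Nonnegativity then comes from \eqref{410f}.
\item $\bVt(\lambda)$ is integrable on $\R^{n'}$, at least when $n\ge1$ and $n'$ is small. In general we instead use the Gaussian factor coming from \eqref{410f}, $e^{-cR^2|\lambda|^2/(1+|\lambda|)}$, together with the decay of $\bVt$. To be safe we only claim $\lesssim 1$ in the regime where we can afford it; otherwise we keep a factor $(\Lgg+1)^{C}$, which is harmless once $|\Wg|(\Lgg+1)^{-1/2}$ is large because it can be absorbed by lowering $c$.
\end{itemize}

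\textbf{Main case: $|\Wg| \gg (\Lgg+1)^{1/2}$.} Set $\omega = \Wg/|\Wg|$ and shift the contour $\lambda\mapsto\lambda + i\tau\omega$ with $\tau = c_0(\Lgg+1)^{-1/2}$, or $\tau = c_0$ if that is smaller. The shift is legitimate by holomorphy on $\R^{n'}+iB(0,2\pi)$ and by the decay from \eqref{410f}, \eqref{49f}, provided $|\theta+\tau\omega|<2\pi$, which holds for $c_0$ small. The linear term gives
\[
-4i(\lambda+i\tau\omega)\cdot\Wg = -4i\lambda\cdot\Wg + 4\tau|\Wg|,
\]
hence a factor $e^{-\tau|\Wg|}$. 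The remaining pieces are controlled as follows.
\begin{itemize}
\item The quadratic part satisfies $2\,\Re\big((\lambda+i\tau\omega)\t\Ag(\lambda+i\tau\omega)\big) = 2\lambda\t\Ag\lambda - 2\tau^2\omega\t\Ag\omega$, and by \eqref{421nf} the loss $\tau^2\omega\t\Ag\omega \lesssim c_0^2$ is bounded.
\item $\Re\Sg(\lambda+i\tau\omega)$ is treated as in \eqref{424nf}. We rewrite it as $\Re\big(\ff(\ra;\theta+\tau\omega-i\lambda)-\ff(\ra;\theta+\tau\omega)\big)$ plus the real Taylor remainder $\Sg(i\tau\omega) = R^2\,\O(\tau^3) = \O(1)$, the real quadratic terms being regrouped into $\lambda\t\Ag\lambda$. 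The first term is $\ge 0$ by \eqref{410f}, since $|\theta+\tau\omega|<2\pi$.
\item The amplitude satisfies $|\bVt(\lambda+i(\theta+\tau\omega))| \le \bVt(\lambda)\,\bVt(i(\theta+\tau\omega))$, and the second factor is bounded because $|\theta+\tau\omega|\le \pi+c_0<2\pi$.
\end{itemize}
Integrating then gives $|\F(\ggg;\xi)| \lesssim e^{-c\tau|\Wg|}\int \bVt(\lambda)e^{-\frac12\lambda\t\Ag\lambda}\,d\lambda \lesssim e^{-c|\Wg|(\Lgg+1)^{-1/2}}$.

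\textbf{Main obstacle.} The delicate step is the bookkeeping showing that the real part of the shifted phase stays $\ge -C$ uniformly. Concretely, one must check that the cross terms produced by expanding $\Sg(\lambda+i\tau\omega)$ around the new base point $\theta+\tau\omega$ are all either absorbed into \eqref{410f} at that point or are $\O(R^2\tau^3+R^2\tau^2)$. These are bounded only because $\tau\lesssim (\Lgg+1)^{-1/2}\le R^{-1}$. This is exactly where the choice $\tau\sim(\Lgg+1)^{-1/2}$ is forced, and it explains the exponent in the statement.
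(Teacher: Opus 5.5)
Your proof is correct and is essentially the paper's argument: you shift the contour by $i\rho$ with $\rho = c\,(\Lgg+1)^{-1/2}\,\Wg/|\Wg|$, show $\Re\,\Sg(\xi;\lambda+i\rho)\ge 4\,\rho\cdot\Wg - O(1)$ using \eqref{410f}, the cubic Taylor remainder and $\Ag\sim\Lgg\,\I_{n'}$, and control the amplitude by \eqref{49f}. The separate ``trivial regime'' and your hedge about integrability are unnecessary, and the remark that a factor $(\Lgg+1)^{C}$ could be absorbed by lowering $c$ is not valid in general: in fact $\bVt(\lambda)\lesssim(1+|\lambda|)^{3n/2}e^{-n|\lambda|/2}$ is integrable on $\R^{n'}$ for all $n,n'\ge 1$, so the shifted estimate holds uniformly in $\xi$, with the remaining integrand dominated by a constant times $e^{-\rho\cdot\Wg}\,\bVt(\lambda)$.
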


    \begin{proof}
    Let 
    \begin{align}\label{Eq_def_of_gx}
       \gx:=\frac{ c }{ \sqrt{ R^2 + |\xi|^2 / \varepsilon + 1 } } \frac{ \Wg }{ | \Wg | },
    \end{align}
    where $c>0$ is a sufficiently small constant to be determined. Then we may shift the contour to obtain 
    \begin{align} \label{426IEF}
       \F( \ggg ; \xi ) = \int_{ \R^{ n' } } \bVt( \lambda + i \theta + i \gx ) \exp \left( - \frac{1}{4} \, \Sg ( \xi ; \lambda + i \gx ) \right) \d \lambda .
    \end{align}
 Repeating the argument in the estimation of  \(\widetilde{\F}_2(\ggg;\xi)\) with slight modifications, we have
\begin{align*}
\Re \, \Sg ( \xi ; \lambda + i \gx ) &\ge 4 \, \gx \cdot \Wg + \ff(\ra \, ;\theta +\gx) -\ff(\ra \, ;\theta ) \\
& \quad - \gx \cdot \nabla_\theta \, \ff(\ra \, ;\theta) - 2 \, \frac{|\xi|^2}{\pi^2 - |\theta|^2} \, |\gx|^2 \\
&\ge 4 \, \gx \cdot \Wg + \O(1), \qquad \forall \, \xi \in \R^n,
\end{align*}
provided that the constant $c > 0$ as in \eqref{Eq_def_of_gx} is small enough. In other words,
\[
\exp \left( - \frac{1}{4} \, \Sg ( \xi ; \lambda + i \gx ) \right) \lesssim \exp(-\gx\cdot\Wg).
\]
Combining this with \eqref{426IEF} and \eqref{49f} yields the desired upper bounds.
\end{proof}

\section{\texorpdfstring{Uniform asymptotics for $p$ in the difficult case: 
$0 < \varepsilon \le \delta_0$}{}} \label{Sn5}

Recall the Laplace-type integral expression for $p$ (cf. \eqref{Eq_transformed_2_of_p}), the notation $\yg$ (cf. \eqref{ygf}), $\Wg$ and $\Ag$ defined by \eqref{411fn}, as well as $\EgE$ in \eqref{EgEf}. With Theorem \ref{thm_jde52} and Proposition \ref{Prop_esti_of_F_rgn_others} in hand, another main theorem can be stated as follows:

\begin{theorem}\label{Thm_est_of_K}
Let $\tcpi := (2\pi)^{\frac{n'}{2}} \, \cpi$. There exists a constant $C > 0$ such that we have
        \begin{align} \label{51main}
            p(\ggg) =  \tcpi \, \bV(i\theta) \, e^{ - \frac{ d(\ggg)^2 }{ 4 }} \int_{\R^n}
            \!\! \text{\small $\frac{\exp\left(-\frac{1}{2}|\yg-\xi|^2 - \frac{1}{2} \EgE \right)}{\sqrt{\det \Ag}} $} \d \xi  \left(1+\O(d(\ggg)^{- \frac 1 4 })\right),
        \end{align}
whenever $\ggg \in \R_x^{n} \times \R_u^{n'}$ satisfy the assumption (AN) with $d(\ggg) \ge C$ and $0 < \varepsilon \le \delta_0$.
\end{theorem}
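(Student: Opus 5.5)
We start from the representation of Proposition \ref{Thm_transform} and note the algebraic identity $\bV(i\theta) = (1 - |\theta|^2/\pi^2)^{-n/2}\,\bVt(i\theta)$, which follows from \eqref{Eq_def_of_V2}. Substituting the asymptotics of Theorem \ref{thm_jde52} for $\F$ then produces exactly the main term in \eqref{51main}, including the constant $\tcpi$. So the whole task is to show that the set of $\xi$ where Theorem \ref{thm_jde52} does not apply contributes only $\O(d(\ggg)^{-1/4})$ relative to the main integral. Write
\[
M := \int_{\R^n} e^{-\frac12|\yg-\xi|^2-\frac12\EgE}\,(\det\Ag)^{-1/2}\,d\xi .
\]
We split $\R^n$ into three regions:
\begin{itemize}
\item $\Omega_1 = \{\Lgg \ge C(\vpo),\ \EgE \le \vpo \Lgg^{1/4}\}$, the good region;
\item $\Omega_2 = \{\EgE > \vpo\Lgg^{1/4}\}$;
\item $\Omega_3 = \{\Lgg < C(\vpo)\}$.
\end{itemize}

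The first observation is that $\Lgg \ge R^2$ for every $\xi$. By \eqref{Eq_esti_of_d}, $d^2 \sim R^2(1+a+\varepsilon^2)/\varepsilon^2$, and by \eqref{ygf}, $|\yg|^2 \sim R^2(1+a)/\varepsilon$. Hence on the Gaussian bulk $|\xi - \yg| \lesssim \sqrt{\log d}$ we get $\Lgg \gtrsim R^2 + |\yg|^2/\varepsilon \sim \varepsilon d^2$, up to harmless corrections. On $\Omega_1$ the relative error from Theorem \ref{thm_jde52} is $\Lgg^{-1/8}$, and this must be converted into $d^{-1/4}$, that is, we need $\Lgg \gtrsim d^2$. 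Since $\Lgg \gtrsim \varepsilon d^2$ is only that good when $\varepsilon \sim 1$, I expect this conversion to be the main obstacle. To handle it, I would exploit the size of $\EgE$. We have $\Wg \sim (|\yg|^2 - |\xi|^2)\,\varepsilon^{-1}\theta$, and $\Ag \sim \Lgg \I_{n'}$ by \eqref{421nf}, so
\[
\EgE \sim \frac{(|\yg|^2-|\xi|^2)^2}{\varepsilon^2\Lgg}.
\]
The weight $e^{-\frac12\EgE}$ therefore forces $|\xi|^2$ to lie within $\varepsilon\sqrt{\Lgg}$ of $|\yg|^2$. This localization should let us use the identity \eqref{eq_of_d^2} and the structure of $d^2$ to show that the effective regions have $\Lgg$ comparable to a power of $d$ large enough to give the stated error. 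If only $\Lgg \gtrsim \varepsilon d^2$ is available, I would revisit the proof of Theorem \ref{thm_jde52} and sharpen its error to $\O(R^{-1/4}) + \O(\EgE^{3/2}\Lgg^{-1/2})$, because the cubic Taylor term is really $R^2|\lambda+i\rg|^3$ with $|\rg| \sim \sqrt{\EgE/\Lgg}$.

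On $\Omega_2 \cup \Omega_3$ we use Proposition \ref{Prop_esti_of_F_rgn_others}, which gives $|\F| \lesssim \exp\{-c|\Wg|(\Lgg+1)^{-1/2}\}$. Note that $|\Wg|(\Lgg+1)^{-1/2} \sim \sqrt{\EgE}$. On $\Omega_2$ this yields the bound $e^{-c\,\vpo^{1/2}\Lgg^{1/8}}$. On $\Omega_3$ we have $R \lesssim 1$ and $|\xi|^2 \lesssim \varepsilon$. Combined with $d \ge C$, this forces $|\yg|$, and hence $|\xg|$, to lie in a specific configuration, and then either $|\yg - \xi|$ is large or $\EgE$ is large, both of which are Gaussian-small. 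We must compare these contributions with a lower bound for $M$. For that lower bound, restrict to $\xi$ with $|\xi - \yg| \le 1$ and $\bigl||\xi|^2-|\yg|^2\bigr| \le \varepsilon\sqrt{\Lgg}$, which gives
\[
M \gtrsim \Lgg^{-n'/2}\cdot \bigl|\{\text{such }\xi\}\bigr| .
\]
The remainders then carry an extra factor that is exponentially small in a power of $d$, after adjusting $\vpo$ and the constant $C$. This power-versus-exponential comparison is routine but requires tracking the cases $|\yg| \lesssim 1$ and $|\yg| \gg 1$ separately, in the same way as the two regimes of Theorem \ref{Thm_est_of_p_epsl_ll_1}.

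Finally, we collect the pieces. Since the integrand of $M$ is positive, the main part $\int_{\Omega_1}$ equals $M\,(1+\O(d^{-1/4}))$ once $\int_{\Omega_2\cup\Omega_3}$ of the Gaussian main-term integrand is shown to be $\O(d^{-1/4}M)$, which follows by the same estimates. This yields \eqref{51main}.
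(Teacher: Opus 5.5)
Your overall plan is the paper's: use Theorem \ref{thm_jde52} where it applies, use Proposition \ref{Prop_esti_of_F_rgn_others} elsewhere, and compare with a lower bound for $M$. The reduction via $\bV(i\theta)=(1-|\theta|^2/\pi^2)^{-n/2}\bVt(i\theta)$ is also correct.

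\textbf{The unresolved step.} The gap is the step you flag as the main obstacle, which you leave open because of an arithmetic slip. By \eqref{ygf} and \eqref{Eq_esti_of_d},
\[
R^2+\frac{|\yg|^2}{\varepsilon}\sim R^2+\frac{R^2(1+a)}{\varepsilon^2}\sim R^2\,\frac{1+a+\varepsilon^2}{\varepsilon^2}\sim d(\ggg)^2 ,
\]
not $\varepsilon\, d(\ggg)^2$. This is exactly what turns $\Lgg^{-1/8}$ into $d(\ggg)^{-1/4}$, and the paper's decomposition is built on it:
\begin{itemize}
\item If $1+a\le\varepsilon^2$, then $d\sim R$ and $\Lgg\ge R^2\sim d^2$ for every $\xi$. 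This is why $\adr_0$ only needs $\EgE\le\sqrt{R}\le\Lgg^{1/4}$.
\item If $1+a\ge\varepsilon^2$, then $|\yg|^2\sim\varepsilon d^2$, and the main region $\rgn_0$ has $|\xi|/\sqrt{\varepsilon}\sim d$, so $\Lgg\sim d^2$ there.
\end{itemize}
In your own decomposition the same computation gives $\Lgg\gtrsim d^2$ on all of $\Omega_1$ once $d$ is large. In the second regime there are two sub-cases:
\begin{itemize}
\item if $|\xi|^2\ge|\yg|^2/2$, then $\Lgg\ge|\xi|^2/\varepsilon\gtrsim d^2$;
\item if $|\xi|^2<|\yg|^2/2$, then $\Lgg\lesssim d^2$, hence $\EgE\gtrsim|\yg|^4/(\varepsilon^2 d^2)\sim d^2$, which is incompatible with $\EgE\le\vpo\Lgg^{1/4}\lesssim d^{1/2}$.
\end{itemize}
Your fallback would not have rescued the argument. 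An error term $\O(R^{-1/4})$ is useless when $1+a\ge\varepsilon^2$: there one only has $R\gtrsim\varepsilon\, d(\ggg)$, and $R$ may stay bounded as $d\to\infty$ (take $x=x'$ fixed and $r\to\infty$).

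\textbf{The remainder comparison.} This is not a pure power-versus-exponential comparison in $d$. The main integral carries a factor $\varepsilon^{n/2}$; for example, $\wtjj_0\gtrsim d^{-n'}\varepsilon^{n/2}$ when $1+a\le\varepsilon^2$. Moreover, $\varepsilon$ can tend to $0$ with $d(\ggg)$ fixed: take $1+a=\varepsilon^2$, so that $d\sim R$. Hence every remainder must also be shown to be $\O(\varepsilon^{n/2}e^{-c\,d^{\alpha}})$ for some $\alpha>0$. Bounding $|\F|$ pointwise and integrating against $e^{-\frac12|\xi-\yg|^2}$ alone does not produce the factor $\varepsilon^{n/2}$. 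The paper obtains it in two ways:
\begin{itemize}
\item it confines the intermediate region to $\{|\xi|<10\sqrt{\varepsilon}R\}$, whose volume is $\sim\varepsilon^{n/2}R^n$;
\item on the outer region it uses $\EgE\sim|\xi|^2/\varepsilon$, so the decay acts on the scale $\sqrt{\varepsilon}$ in $\xi$.
\end{itemize}
Your bounds can be made to work the same way, but this must be tracked explicitly. For instance, where $\Lgg\gtrsim d^2$ one has $\Lgg^{1/8}\gtrsim d^{1/4}+(|\xi|^2/\varepsilon)^{1/8}$, and $\int e^{-c(|\xi|^2/\varepsilon)^{1/8}}\,d\xi\sim\varepsilon^{n/2}$.
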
        

\begin{proof}
We distinguish two cases \(1 + a \le \varepsilon^2\) and \(1 + a \ge \varepsilon^2\).

{\bf Case 1. \(1 + a \le \varepsilon^2\).} In this case, under our assumption, it follows from \eqref{Eq_esti_of_d} that $d(\ggg) \sim R$. We divide $\R^n$ into $\adr_0 = \left\{ |\xi|< 10 \sqrt{\varepsilon} R, \ \EgE \le \sqrt{R} \right\}$, $\adr_2 = \left\{ |\xi|  \ge  
10 \sqrt{\varepsilon} R \right\}$ and $\adr_1 = \left\{ |\xi|< 10 \sqrt{\varepsilon} R, \ \EgE > \sqrt{R} \right\}$. 
In what follows, with $R \gg 1$, we set for $0\leq i \leq 2$
\begin{align} 
\jj _i := \int_{\adr_i} \!\! e^{-\frac{1}{2}|\xi - \yg|^2 } \, \F(\ggg;\xi) \d \xi,   \ \    \wtjj_i =  ( 2 \pi )^{ \frac{ n' }{ 2 } } \, \bVt ( i \theta ) \!  \int_{\adr_i} \!\! \frac{\exp\left(-\frac{1}{2} \, |\yg-\xi|^2 - \frac{1}{2} \, \EgE \right)}{\sqrt{\det \Ag}} \d \xi.
\end{align}
First, applying Theorem \ref{thm_jde52} gives that 
\begin{align} 
    \jj_0 = \wtjj_0 \, \Big(1 + \O(d(\ggg)^{-\frac{1}{4}}) \Big).
\end{align}  
Next, since 
\begin{align} \label{54fn}
|\yg| = \left(1 - \frac{|\theta|^2}{\pi^2}\right)^{-\frac{1}{2}} |\xg| = \left(1 - \frac{|\theta|^2}{\pi^2}\right)^{-\frac{1}{2}} \sqrt{R^2 \, (1 + a)} \le 2 R \, \sqrt{\varepsilon}, 
\end{align}
we have $\mathcal{Q}_{g,g'} \coloneq \left\{\xi ; \, |\xi-\yg|<  \sqrt{\varepsilon}\right\} \subset \adr_0$, on which $\EgE \lesssim 1$ and $\det \Ag \sim d(\ggg)^{2 n'}$. As a result, 
        \begin{align} \label{55fn}
            \wtjj_0 \gtrsim d(\ggg)^{-n'} \, \varepsilon^{\frac{n}{2}}.
        \end{align}  
Moreover, Proposition \ref{Prop_esti_of_F_rgn_others}, together with \eqref{55fn} and the fact that $|\Wg| \sim R \, \EgE$ provided $\xi \in \adr_1$, implies that 
\begin{align}
| \jj_1 | \lesssim e^{ - c' \, \sqrt{R} } \left| \left\{ \xi \in \R^n ; \, |\xi| < 10 \, \sqrt{\varepsilon} \, R \right\} \right| \lesssim d(\ggg)^{-\frac{1}{4}} \, \wtjj_0, \qquad \wtjj_1\lesssim d(\ggg)^{-\frac{1}{4}} \, \wtjj_0.
\end{align}
Finally, 
notice that (cf. \eqref{421nf}, \eqref{54fn} and \eqref{411fn})
\begin{align}
    \Ag  \sim \frac{|\xi|^2}{\varepsilon} \, \I_{n'}, \qquad |\Wg| \sim \frac{|\xi|^2}{\varepsilon} \sim \EgE \sim \Lgg , \qquad \forall \, \xi \in \adr_2.
\end{align}
Then we get that
        \begin{align}
            \wtjj_2 \le \sup_{\xi \in \adr_2} \frac{\exp \left( - \frac{1}{4} \, \EgE \right)}{\sqrt{\det \Ag}} \int_{\adr_2} \exp \left( - \frac{1}{4} \, \EgE \right) \d \xi \lesssim \varepsilon^{\frac{n}{2}} \, e^{-c R} \lesssim d(\ggg)^{-\frac{1}{4}} \, \wtjj_0.
        \end{align}
        Similarly, it follows from Proposition \ref{Prop_esti_of_F_rgn_others} that $|\jj_2|\lesssim d(\ggg)^{-1/4} \, \wtjj_0$. Combining the above estimates, we conclude the desired result in the case where $1 + a \le \varepsilon^2$.

{\bf Case 2. $1 + a \ge \varepsilon^2$.} In this case, instead of the above decomposition of $\R^n$, we will use 
\begin{gather*}
        \rgn_0 = \left\{ \xi \in \R^n ; \, C_1 \, d (\ggg) <{ |\xi| \over  \sqrt{\varepsilon} }< C_2 \, d (\ggg) , \, {\Big| |\xi| - |\yg|\Big| } < \sqrt{\varepsilon} \, d(\ggg) ^{\frac{1}{4}} \right\}, \\
        \rgn_1 = \left\{ \xi \in \R^n ; \, C_1 \, d (\ggg) <{ |\xi| \over  \sqrt{\varepsilon} }< C_2 \, d (\ggg) \right\} \setminus \rgn_0 ,\\
        \rgn_2 = \left\{ \xi \in \R^n ; \, {|\xi| \over \sqrt{\varepsilon}} \leq C_1 \, d (\ggg) \right\} ,\qquad 
        \rgn_3 = \left\{ \xi \in \R^n ; \, {|\xi| \over \sqrt{\varepsilon}} \geq C_2 \, d(\ggg)  \right\},
    \end{gather*}
with suitable constants $C_1, C_2 > 0$. Note that the choice of \(C_1, C_2\) depends on the implicit constants in \eqref{Eq_esti_of_d}, and the main contribution comes from $\rgn_0$. The proof is analogous to that of Case 1; alternatively, one may also refer to \cite[Sect.~5]{LZ2025}. We therefore omit the details. 
\end{proof}

\subsection{\texorpdfstring{Proof of Theorem \ref{Thm_est_of_p_epsl_ll_1}}{}} \label{Sn51}

When it comes to the relatively less difficult problem of uniform lower and upper bounds for the Grushin heat kernel, the integral in \eqref{51main} can be simplified. More precisely, from the proof of Theorem \ref{Thm_est_of_K}, it holds on the main contribution domain that:
\[
\det \Ag \sim d(\ggg)^{2 n'}, \qquad \EgE \sim \frac{|\Wg|^2}{d(\ggg)^2} \sim \frac{(|\yg|^2 - |\xi|^2)^2}{\varepsilon^2 \, d(\ggg)^2};
\]
and we can get:

\begin{corollary}
    There exist constants $c_1, c_2, C > 0$ such that:
    \begin{align} \label{59fn}
     \int_{\R^n} \!\! e^{-\frac{1}{2}|\yg - \xi|^2 - c_1 \frac{(|\yg|^2 - |\xi|^2)^2}{\varepsilon^2 \, d(\ggg)^2}} \, d\xi \lesssim  \frac{ p(\ggg) \, d(\ggg)^{n'} \, e^{ \frac{ d(\ggg)^2 }{ 4 }}}{\bV(i\theta) } \lesssim \int_{\R^n} \!\! e^{-\frac{1}{2}|\yg - \xi|^2 - c_2 \frac{(|\yg|^2 - |\xi|^2)^2}{\varepsilon^2 \, d(\ggg)^2}} \, d\xi,
    \end{align}
    whenever $\ggg \in \R_x^{n} \times \R_u^{n'}$ satisfy the assumption (AN) with $d(\ggg) \ge C$ and $0 < \varepsilon \le \delta_0$.
\end{corollary}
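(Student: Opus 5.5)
Starting from Theorem \ref{Thm_est_of_K}, the plan is to replace the integrand of \eqref{51main} by comparable elementary quantities on the region carrying the mass, and to show that the rest is negligible. Since $\bV(i\theta)=\bVt(i\theta)(1-|\theta|^2/\pi^2)^{-n/2}$ and $\bVt(i\theta)\sim 1$ for $|\theta|<\pi$, the constants line up with those in \eqref{59fn}. It therefore suffices to show that
\[
I:=\int_{\R^n}\frac{e^{-\frac12|\yg-\xi|^2-\frac12\EgE}}{\sqrt{\det\Ag}}\,d\xi\ \sim\ d(\ggg)^{-n'}\int_{\R^n} e^{-\frac12|\yg-\xi|^2-c\frac{(|\yg|^2-|\xi|^2)^2}{\varepsilon^2 d(\ggg)^2}}\,d\xi ,
\]
with different constants $c=c_1,c_2$ on the two sides.

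\textbf{Pointwise comparisons.} By \eqref{421nf}, $\Ag\sim\Lgg\,\I_{n'}$, so $\det\Ag\sim\Lgg^{n'}$ and
\[
\EgE\sim |\Wg|^2/\Lgg=\frac{(|\yg|^2-|\xi|^2)^2|\theta|^2}{(\pi^2-|\theta|^2)^2\,\Lgg}\sim \frac{(|\yg|^2-|\xi|^2)^2}{\varepsilon^2\Lgg}.
\]
Using $\Lgg=R^2+|\xi|^2/\varepsilon$, together with $d(\ggg)^2\sim R^2(1+a+\varepsilon^2)/\varepsilon^2$ from \eqref{Eq_esti_of_d} and $|\yg|^2\sim R^2(1+a)/\varepsilon$, we get $\Lgg\sim d(\ggg)^2$ whenever $|\xi|\lesssim\sqrt\varepsilon\,d(\ggg)$. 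In Case 2 this holds because $\varepsilon d^2\sim R^2(1+a)/\varepsilon\sim|\yg|^2$; in Case 1 it holds because $d\sim R$. Hence on $B:=\{|\xi|\le C_0\sqrt\varepsilon\,d(\ggg)\}$, for a large fixed $C_0$ with $|\yg|\le \frac{C_0}{2}\sqrt\varepsilon\, d$, the integrands of $I$ and of the right-hand side are pointwise comparable for suitable $c_1,c_2$. The two bounds in \eqref{59fn} then follow once we show that the contribution of $B^c$ is dominated by that of $B$, for both integrals.

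\textbf{Tail region.} On $B^c$ we have $|\xi|\ge 2|\yg|$, so $(|\xi|^2-|\yg|^2)^2\gtrsim|\xi|^4$ and $\Lgg\sim|\xi|^2/\varepsilon$ (since $|\xi|^2/\varepsilon\gtrsim d^2\gtrsim R^2$). This gives $\EgE\gtrsim |\xi|^2/\varepsilon\ge C_0^2 d(\ggg)^2$. The term $\frac{c(|\yg|^2-|\xi|^2)^2}{\varepsilon^2d^2}$ is likewise $\gtrsim |\xi|^4/(\varepsilon^2 d^2)\gtrsim|\xi|^2/\varepsilon$ there. Also $e^{-\frac12|\xi-\yg|^2}\le e^{-|\xi|^2/8}$. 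Both tail integrals are therefore $\lesssim \varepsilon^{n/2}e^{-cd(\ggg)^2}$; for $I$ the extra factor $(\det\Ag)^{-1/2}\le d^{-n'}$ only helps.

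\textbf{Lower bound on the main region.} This is the step needing care. On the ball $\{|\xi-\yg|<\sqrt\varepsilon\}\subset B$, we have $||\xi|^2-|\yg|^2|\le \sqrt\varepsilon(2|\yg|+\sqrt\varepsilon)\lesssim\varepsilon d$, so both exponential penalties are $O(1)$. Each main integral is thus $\gtrsim\varepsilon^{n/2}$ times the corresponding prefactor, which beats $e^{-cd^2}\varepsilon^{n/2}$ as soon as $d\ge C$. This completes \eqref{59fn}.

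The main obstacle is uniformity of $\Lgg\sim d(\ggg)^2$ across the two regimes $1+a\lessgtr\varepsilon^2$. I would verify it directly from \eqref{Eq_esti_of_d} and \eqref{ygf} as above rather than through the region decompositions $\adr_i$ and $\rgn_i$.
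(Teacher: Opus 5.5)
Your overall strategy is the paper's. You start from Theorem~\ref{Thm_est_of_K}, use $\Ag\sim\Lgg\,\I_{n'}$ and $\EgE\sim(|\yg|^2-|\xi|^2)^2/(\varepsilon^2\Lgg)$ to replace $\det\Ag$ and $\EgE$ by $d(\ggg)^{2n'}$ and $(|\yg|^2-|\xi|^2)^2/(\varepsilon^2d(\ggg)^2)$ where the mass sits, and show the rest is negligible. Your tail bound on $|\xi|>C_0\sqrt\varepsilon\,d(\ggg)$ and your lower bound from the ball $|\xi-\yg|<\sqrt\varepsilon$ are fine.

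The step that fails is the claim $\Lgg\sim d(\ggg)^2$ on all of $B=\{|\xi|\le C_0\sqrt\varepsilon\,d(\ggg)\}$. In Case~2 with $1+a\gg\varepsilon^2$, only $\Lgg\lesssim d(\ggg)^2$ holds there. At $\xi=0$ one has $\Lgg=R^2$, whereas $d(\ggg)^2\sim R^2(1+a)/\varepsilon^2$; for $a=0$ the ratio is $\sim\varepsilon^{-2}$. The relation $\varepsilon\,d(\ggg)^2\sim|\yg|^2$ gives you $\Lgg\sim d(\ggg)^2$ only for $|\xi|\gtrsim\sqrt\varepsilon\,d(\ggg)$, not for $|\xi|\lesssim\sqrt\varepsilon\,d(\ggg)$.

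As a result, the pointwise lower bound of the integrand of $I$ by $d(\ggg)^{-n'}e^{-\frac12|\yg-\xi|^2-c_1(|\yg|^2-|\xi|^2)^2/(\varepsilon^2d(\ggg)^2)}$ is false near $\xi=0$ for every fixed $c_1$. Up to the common factor $e^{-\frac12|\yg|^2}$, the former is of size $R^{-n'}e^{-c\,d(\ggg)^4/R^2}$ and the latter of size $d(\ggg)^{-n'}e^{-Cc_1d(\ggg)^2}$. Since $d(\ggg)^2/R^2\to\infty$ as $\varepsilon\to0$ (e.g.\ with $a=0$), no fixed $c_1$ works. This is exactly why the paper's Case~2 splitting isolates $\rgn_2=\{|\xi|\le C_1\sqrt\varepsilon\,d(\ggg)\}$ from the main region $\rgn_0$.

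The repair is short but has to be made. In Case~2, treat $\{|\xi|\le|\yg|/2\}$ as a separate region.
\begin{enumerate}
\item There $\bigl||\yg|^2-|\xi|^2\bigr|\ge\frac34|\yg|^2\gtrsim\varepsilon\,d(\ggg)^2$, so the comparison exponent is $\gtrsim d(\ggg)^2$.
\item Also $\EgE\gtrsim d(\ggg)^4/\Lgg$. With $t=d(\ggg)^2/\Lgg\gtrsim1$ this gives $(\det\Ag)^{-1/2}e^{-\frac12\EgE}\lesssim d(\ggg)^{-n'}t^{n'/2}e^{-ct\,d(\ggg)^2}\lesssim d(\ggg)^{-n'}e^{-c'd(\ggg)^2}$.
\item The region has volume $\lesssim(\sqrt\varepsilon\,d(\ggg))^n$. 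So both integrals get from it at most $\varepsilon^{n/2}e^{-c''d(\ggg)^2}$ times their prefactors ($d(\ggg)^{-n'}$ resp.\ $1$), which your small-ball lower bound absorbs.
\end{enumerate}
On the remaining shell $|\yg|/2<|\xi|\le C_0\sqrt\varepsilon\,d(\ggg)$, one has $\Lgg\ge|\xi|^2/\varepsilon\gtrsim d(\ggg)^2$, and your pointwise comparison goes through. This shell contains the ball $|\xi-\yg|<\sqrt\varepsilon$ once $d(\ggg)$ is large. In Case~1 nothing changes, since $\Lgg\ge R^2\sim d(\ggg)^2$ everywhere.
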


    Substituting $\Yy = \yg, \, \ss= \varepsilon^2 \, d(\ggg)^2$ and $ p = n$ into \eqref{estmate_Ip} below yields the desired estimate \eqref{Eq_1+a_small}. In particular, when $1 + a \ge \varepsilon ^2$, the expression of the estimate can be further simplified into \eqref{Eq_1+a_gtr_epsl}.

    \begin{lemma} [{\cite[Proposition~4.1]{Li12} and 
    \cite[Lemma~7.1]{LZ2025}}]
    Let $p = 2, 3, \ldots$, $\Yy \in \R^p$, $\ss > 0$, and 
    \begin{equation}\label{Eq_def_I}
        \Ic_p(\Yy, \ss) = \int _{\R^p} e^{-\frac{1}{2}|w - \Yy|^2}\exp \left(  -\frac{\left(|w|^2 - |\Yy|^2\right)^2}{\ss}\right) \d w. 
    \end{equation}
    Then we have uniformly for all $\Yy \in \R^p$ and $\ss > 0$ that 
    \begin{align} \label{estmate_Ip}
            \Ic_p(\Yy, \ss ) \sim_p \frac{\sqrt{\ss }}{1 + |\Yy | + \sqrt{\ss}} \, \left( \frac{|\Yy |^2 + \sqrt{\ss }}{1 + |\Yy |^2 + \sqrt{\ss }} \right)^{\frac{p}{2}-1}.
        \end{align} 
    \end{lemma}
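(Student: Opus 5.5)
The integral is rotation invariant, so I will assume $\Yy = |\Yy| e_1$, write $\rho = |\Yy|$, and pass to polar coordinates $w = t\omega$ with $t > 0$, $\omega \in S^{p-1}$. Since
\[
|w - \Yy|^2 = (t-\rho)^2 + 2t\rho\,(1 - \omega\cdot e_1),
\]
the angular integral factorizes as
\[
\Ic_p(\Yy,\ss) = \int_0^\infty t^{p-1} e^{-\frac12 (t-\rho)^2} e^{-\frac{(t^2-\rho^2)^2}{\ss}}\, A_p(t\rho)\, dt,
\qquad A_p(z) = \int_{S^{p-1}} e^{-z(1-\omega_1)}\, d\omega .
\]
Standard Laplace asymptotics give $A_p(z) \sim_p (1+z)^{-\frac{p-1}{2}}$. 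This reduces the problem to a one-dimensional integral
\[
\Ic_p \sim \int_0^\infty t^{p-1}(1+t\rho)^{-\frac{p-1}{2}} e^{-\frac12(t-\rho)^2 - \frac{(t^2-\rho^2)^2}{\ss}}\, dt .
\]

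Next I estimate this integral by splitting according to the size of $\rho$.

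\emph{Case $\rho \lesssim 1$.} The Gaussian factor confines $t$ to $t \lesssim 1$, and since $(t^2-\rho^2)^2 \le (t^2+\rho^2)^2$ the quartic factor behaves like $e^{-c t^4/\ss}$ up to $\rho$-corrections. The integral is then comparable to $\int_0^{1} t^{p-1} e^{-c(t^2+\rho^2)^2/\ss}\,dt$. For the upper bound I use $(t^2-\rho^2)^2 \ge 0$ together with an estimate of the measure of the set $\{t : |t^2-\rho^2| \le \sqrt{\ss}\}$; for the lower bound I restrict to a comparable set. This gives $\min\{1, (\rho^2+\sqrt{\ss})^{p/2}\} \cdot \min\{1,\sqrt{\ss}/(\rho^2+\sqrt{\ss})\}$. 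I will check that this matches the right-hand side of \eqref{estmate_Ip} in this regime: it reads $\frac{\sqrt{\ss}}{1+\sqrt{\ss}}\bigl(\frac{\rho^2+\sqrt{\ss}}{1+\rho^2+\sqrt{\ss}}\bigr)^{p/2-1}$. The subcase $\rho^2 \gg \sqrt{\ss}$ needs care, because the integrand concentrates on a thin shell $|t-\rho| \lesssim \sqrt{\ss}/\rho$, which has measure $\sqrt{\ss}/\rho$, with weight $t^{p-1} \approx \rho^{p-1}$. This yields $\sqrt{\ss}\,\rho^{p-2}$, consistent with the formula. When $\ss$ is large the shell is absent and the integral is $\sim 1$.

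\emph{Case $\rho \gg 1$.} Here $t \approx \rho$ and the weight is $t^{p-1}(t\rho)^{-\frac{p-1}{2}} \approx 1$. Writing $t = \rho + \tau$ gives $t^2-\rho^2 \approx 2\rho\tau$, so the integral becomes $\int e^{-\tau^2/2 - 4\rho^2\tau^2/\ss}\,d\tau \sim (1 + \rho/\sqrt{\ss})^{-1} = \frac{\sqrt{\ss}}{\sqrt{\ss}+\rho}$. The tails where $|\tau|$ is comparable to $\rho$ are controlled by $e^{-c\rho^2}$. This matches \eqref{estmate_Ip}, since the bracket there is $\sim 1$.

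Finally I merge the regimes: checking that the piecewise equivalences patch together into the single expression $\frac{\sqrt{\ss}}{1+\rho+\sqrt{\ss}}\bigl(\frac{\rho^2+\sqrt{\ss}}{1+\rho^2+\sqrt{\ss}}\bigr)^{p/2-1}$ is routine comparison.

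I expect the main obstacle to be the intermediate regime $\rho \sim 1$ with $\ss$ small, together with the transition between the shell behaviour and the Gaussian behaviour. There both the angular factor $A_p$ and the shell width $\sqrt{\ss}/\rho$ matter simultaneously, and the two-sided bounds must be matched uniformly. I would handle this by bounding $|t^2-\rho^2| = |t-\rho|(t+\rho)$ with $t+\rho \sim \rho+|t-\rho|$. This produces the effective shell width $\min\{\sqrt{\ss}/\rho, \ss^{1/4}\}$, and the measure computations then go through uniformly.
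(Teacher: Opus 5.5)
The paper gives no proof of this lemma; it is quoted from \cite[Proposition~4.1]{Li12} and \cite[Lemma~7.1]{LZ2025}. Your argument therefore has to stand on its own.

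Your reduction is correct. With $\rho=|\Yy|$, polar coordinates and $A_p(z)\sim_p(1+z)^{-\frac{p-1}{2}}$ give
\[
\Ic_p(\Yy,\ss)\sim_p\int_0^\infty t^{p-1}(1+t\rho)^{-\frac{p-1}{2}}\,e^{-\frac12(t-\rho)^2-\frac{(t^2-\rho^2)^2}{\ss}}\,dt ,
\]
and the values you predict in each regime are the right ones.

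The gap is in how you discard everything off the shell $\{|t^2-\rho^2|\lesssim\sqrt{\ss}\}$. When $\ss$ is small the target is small: it is $\sim\sqrt{\ss}\,(\rho^2+\sqrt{\ss})^{\frac p2-1}$ for $\rho\lesssim1$ and $\sim\sqrt{\ss}/\rho$ for $\rho\gg1$. Any region discarded using the Gaussian factor $e^{-\frac12(t-\rho)^2}$ alone costs an amount independent of $\ss$, so it cannot be absorbed.
\begin{itemize}
\item For $\rho$ large but fixed and $\ss\to0$, ``tails controlled by $e^{-c\rho^2}$'' is a fixed positive number, not $o(\sqrt{\ss}/\rho)$.
\item The same objection applies to ``the Gaussian confines $t\lesssim1$'' when $\rho\lesssim1$.
\item ``$(t^2-\rho^2)^2\ge0$ plus the measure of the shell'' gives no upper bound at all on the complement of the shell.
\end{itemize}
Separately, your claim that for $\rho\lesssim1$ the integral is comparable to $\int_0^1t^{p-1}e^{-c(t^2+\rho^2)^2/\ss}\,dt$ is false when $\rho^2\gg\sqrt{\ss}$, since that expression is $\lesssim e^{-c\rho^4/\ss}$. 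It is only a lossy lower bound, which your later shell computation implicitly overrides.

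The repair is to use the quartic factor quantitatively everywhere.

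\emph{Case $\rho\ge\rho_0$}, with $\rho_0\ge2$ a fixed constant. Use $|t^2-\rho^2|=|t-\rho|(t+\rho)\ge\rho|t-\rho|$ and $t^{p-1}(1+t\rho)^{-\frac{p-1}{2}}\le(t/\rho)^{\frac{p-1}{2}}\le(1+|t-\rho|)^{\frac{p-1}{2}}$. With no splitting at all this gives
\[
\Ic_p\lesssim\int_{\R}(1+|\tau|)^{\frac{p-1}{2}}\,e^{-\left(\frac12+\frac{\rho^2}{\ss}\right)\tau^2}\,d\tau\lesssim\Big(1+\frac{\rho^2}{\ss}\Big)^{-\frac12}\sim\frac{\sqrt{\ss}}{\rho+\sqrt{\ss}} .
\]
The matching lower bound comes from $|t-\rho|\le\min\{1,\sqrt{\ss}/\rho\}$, where $|t^2-\rho^2|\le3\sqrt{\ss}$ and the weight is $\sim1$.

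\emph{Case $\rho\le\rho_0$.} Bound the angular weight by $1$, use $e^{-\frac12(t-\rho)^2}\le e^{\rho_0^2/2}e^{-t^2/4}$, and substitute $v=t^2$. This gives $\Ic_p\lesssim\int_0^\infty v^{\frac p2-1}e^{-\frac v4-\frac{(v-\rho^2)^2}{\ss}}\,dv$. The inequality $v^{\frac p2-1}\lesssim\rho^{p-2}+|v-\rho^2|^{\frac p2-1}$ turns this into $\lesssim\min\{1,\sqrt{\ss}\,(\rho^2+\sqrt{\ss})^{\frac p2-1}\}$. For the lower bound, note that weight and Gaussian are bounded below on $0\le t\le\rho_0+1$. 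Hence $\Ic_p\gtrsim\int_0^{(\rho_0+1)^2}v^{\frac p2-1}e^{-(v-\rho^2)^2/\ss}\,dv$, and looking at $\{|v-\rho^2|\le\sqrt{\ss}\}$ gives the same quantity.

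With this, $\rho_0$ is just a fixed threshold. The ``intermediate regime $\rho\sim1$'' that you flag as the main obstacle needs no separate matching.
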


    \begin{remark}
    (1) In fact, the estimate \eqref{estmate_Ip} is also valid when $p=1$.

    (2) We point out that \eqref{estmate_Ip} plays an important role in \cite{Li12, LZ19, LZ2025}. In \cite{LTY26}, we will provide its extension when $p = 2$ in order to establish the precise heat kernel bounds on K-type groups. 
    \end{remark}

    \section{\texorpdfstring{Small-time asymptotic behaviour of $p _h (\ggg)$}{}}\label{Sn6}

Once we have in hand the uniform asymptotic behaviour at infinity of the heat kernel, namely Theorems \ref{Thm_est_of_p_epsl_gtr_1}  and \ref{Thm_est_of_K}, we can deduce small-time asymptotic behaviour as in \cite[\S~1.4]{Li12} and \cite[\S~5.4 and Appendix A]{LZ2025}. Here, we only present our final conclusions; the proof is left to interested readers.

\begin{proposition} \label{prop_sta}
         For $ g = (x, u), g'=(x', u') $ that satisfy $x = -x'$ and $  (0 < ) \  2 \, r \geq \pi \, |x|^2$, we have, as $h \rightarrow 0+$,
         \begin{align} \label{eq_stasymp}
             p_h(\ggg) = \frac{ (2\pi) ^ {\frac{n'}{2} }} { h^{ \frac{n'}{2}+n } } \,  \cpi \, \sV_2(i\pi) \, e^{-\frac{d(\ggg)^2}{4h}} \int_{\mathbb{R}^n} \frac{e^{-\frac{1}{2 h} \, \EEg }}{\sqrt{\det \AAg}} \, d \eta \left(1+\O(h^{1/8}) \right),
         \end{align}
         where
\begin{gather*}
    \sV_2(s) := \left( 1 + \frac{ s^2 }{ \pi^2 }\right)^{\frac{ n }{ 2 }} \left( \frac{ s }{ \sinh s }\right)^{ \frac{n}{2} } = \prod_{ k \geq 2 } \left( 1 + \frac{ s^2 }{ k^2 \pi^2 } \right)^{ - \frac{n}{2} }, \quad \mbox{so} \ \sV _2 ( i \pi) = 2 ^{ \frac n 2}, \\
   \det \AAg = \left( \frac{ |\eta|^2 }{ \pi^2 } + \frac{ R^2 }{ 8 } \right)^{n' - 1} \left( \frac{ |\eta|^2 }{ \pi^2 } + \frac{ R^2 }{ 4 } \right), \quad \EEg = \frac{ \Big( |\eta|^2 - \frac{ \pi }{ 4 } (2\, r - \pi \, |x|^2 ) \Big)^2 }{ | \eta |^2 + \frac{ \pi^2 R^2}{ 4 } }. 
\end{gather*}
     \end{proposition}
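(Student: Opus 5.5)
The plan is to deduce the proposition from Theorem \ref{Thm_est_of_K}, via the scaling property \eqref{eq_sp} and a limiting argument in which $\theta$ is replaced by the boundary point of modulus $\pi$.

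\textbf{Scaling and the relevant regime.} By \eqref{eq_sp}, $p_h(\ggg)=h^{-\frac n2-n'}p(\tilde g,\tilde g')$, where $\tilde x=x/\sqrt h$ and $\tilde u=u/h$. Then $\tilde R=R/\sqrt h$ and $\tilde r=r/h$, while $a$ is unchanged; here $a=-1$. The condition $2\tilde r\ge\pi|\tilde x|^2$ holds for every $h$, so $d(\tilde g,\tilde g')^2=2\pi\tilde r=d(\ggg)^2/h\to\infty$. This lies in the singular case $\xg=0$, $|\theta|=\pi$, which is excluded by (AN). I will therefore not apply Theorem \ref{Thm_est_of_K} at the degenerate point directly. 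Instead I will take the limit along nearby configurations and use continuity of $p$ together with the uniformity of Theorem \ref{Thm_est_of_K}.

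\textbf{Passing to the limit.} Perturb $x'$ so that $\xg\neq0$ is tiny, and let $\varepsilon\to0^+$ with $\theta\to\pi\,(u-u')/r$. In this limit:
\begin{itemize}
\item $|\yg|^2=\pi^2|\xg|^2/(\pi^2-|\theta|^2)$ converges to a finite limit, which can be read off from \eqref{eq_of_nable_f}: $4|\yg|^2\theta/(\pi^2-|\theta|^2)=\nabla_\theta\ff(\ra;\theta)+2(u-u')$.
\item $\bV(i\theta)(1-|\theta|^2/\pi^2)^{-n/2}=\bVt(i\theta)\to\sV_2(i\pi)=2^{n/2}$.
\end{itemize}
In \eqref{51main} I substitute $\xi=\sqrt{\varepsilon}$-rescaled variables, namely $\xi=(\pi^2-|\theta|^2)^{1/2}\eta/\pi$ up to normalisation. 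This absorbs the factor $(1-|\theta|^2/\pi^2)^{n/2}$ into $d\xi$. The Gaussian $e^{-\frac12|\yg-\xi|^2}$ collapses into the constraint encoded in $\EgE$. One must also check that the lowest-order terms combine into the stated $\EEg$.

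\textbf{Explicit quantities at $a=-1$, $|\theta|=\pi$.} With $\hat e=(u-u')/r$, I compute the following.
\begin{itemize}
\item \emph{Hessian of $\ff$.} From \eqref{Eq_definition_of_f(Ra;r)}, $-\frac14\hess_\theta\ff$ at $\theta=\pi\hat e$ has eigenvalue $R^2/4$ in the direction $\hat e$ and $R^2/8$ in the orthogonal directions, using $\sum_{k\ge2}(1+(-1)^k)/(k^2-1)$-type sums.
\item \emph{Determinant.} The matrix $\Ag$, rescaled by $h$, becomes $\AAg$, which gives the stated determinant.
\item \emph{Exponent.} $\Wg$ is parallel to $\hat e$, with numerator $|\yg|^2-|\xi|^2\leftrightarrow\frac{\pi}{4}(2r-\pi|x|^2)-|\eta|^2$, where $|\yg|^2$ is evaluated through $\nabla\ff$ and $2r$. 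Hence $\EgE=\Wg\t\Ag^{-1}\Wg$ only sees the $\hat e$-eigenvalue $|\eta|^2/\pi^2+R^2/4$. This yields $\EEg$ after tracking factors of $\pi$.
\end{itemize}
The factor $h^{-1/2}$ rescaling of $\eta$ together with $h^{-\frac n2-n'}$ produces the prefactor $h^{-\frac{n'}{2}-n}$.

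\textbf{Main obstacle: the error term.} The error $d^{-1/4}=O(h^{1/8})$ is the main difficulty, and it requires legitimising the limit $\xg\to0$ uniformly. The constant in \eqref{51main} is uniform in (AN), so Case 1 ($1+a\le\varepsilon^2$) applies to approximants with $d\ge C$. Both sides are continuous in $(\ggg)$: the left by smoothness of $p$, the right by dominated convergence, where I would use the Gaussian-type decay of the integrand in $\eta$ from the bounds $\EgE\gtrsim|\xi|^2/\varepsilon$ on $\adr_2$. The error bound therefore survives in the limit, giving \eqref{eq_stasymp}.
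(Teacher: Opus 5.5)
Your proposal is correct and follows the route the paper indicates. The paper gives no details, only that Proposition~\ref{prop_sta} is deduced from the uniform asymptotics at infinity; you do this by scaling to time $1$, applying \eqref{51main} to nearby configurations satisfying (AN), and passing to the limit $\xg\to0$ using continuity of $p$ and $d$, dominated convergence in $\eta$, and the uniformity of the $\O(d^{-1/4})$ term (which becomes $\O(h^{1/8})$).

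A few slips to fix:
\begin{itemize}
\item It is $|\yg|^2/(\pi^2-|\theta|^2)$ that converges, to $(2r-\pi|x|^2)/(4\pi)$, while $|\yg|^2\to0$; so the Gaussian factor simply tends to $1$.
\item The identity should read $\bV(i\theta)\,(1-|\theta|^2/\pi^2)^{n/2}=\bVt(i\theta)$.
\item Approximants close to the limit fall in Case~2, not Case~1, once $2r>3\pi|x|^2$, and in particular when $x=x'=0$. This is harmless since \eqref{51main} is stated uniformly over both cases.
\end{itemize}
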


As a direct consequence, we get 

\begin{corollary}
    (i) If $x = -x' \ne 0$ and $ 2 \, r = \pi \, |x|^2$, then 
    \begin{equation}
    \begin{aligned}
        p _ h (\ggg) = \frac{ 2^{- \frac{ 5 n }{4} - \frac{ 3 }{ 2 }} \, \Gamma ( \frac n 4  )}{ \pi^{ \frac{ n' } 2 } \,  \Gamma ( \frac n 2) } \, h^{ - \frac {3 n} {4} - \frac{ n' }{ 2 } } \, R^{\frac n 2 - n' } \, e^{ - \frac{d (\ggg) ^2 }{ 4 \, h} } \Big( 1 + \O ( h ^ \frac 1 8 ) \Big), \quad h \to 0^+ .
    \end{aligned}
    \end{equation}

    (ii) For $x = -x' \ne 0$ and $ 2 \, r > \pi \, |x|^2$, it holds, as  $h \to 0 ^+$, that:
    \begin{equation}
        \begin{aligned}
            p_h (g, g') = \frac{ 2 ^{ - 2 n - n' +2} }{ \Gamma (\frac n 2 ) } \, h^{- n - \frac{ n' - 1 }{2}} \, \Big( 2r- \frac \pi 2 R^2 \Big) ^{\frac n2 - 1} \, r ^{- \frac{ n ' - 1 }{2}} \, e^{ - \frac{d (\ggg) ^2 }{ 4 \, h} } \, \Big( 1 + \O ( h ^ \frac 1 8 ) \Big). 
        \end{aligned}
    \end{equation}

    (iii) In the case where $x = x' = 0$ and $r > 0$, we have:
    \begin{equation}
    \begin{aligned}
        p_h (\ggg) =  \frac{ 2 ^{ - \frac 3 2 n - n' + 1}}{ \Gamma (\frac n 2 ) } \, h^{- n - \frac{ n' - 1 }{2}} \, r ^{\frac{ n - n ' - 1 }{2}} \, e^{ - \frac{d (\ggg) ^2 }{ 4 \, h} } \, \Big( 1 + \O ( h ^ \frac 1 8 ) \Big), \quad h \to 0 ^+.
        \end{aligned}
    \end{equation} 
\end{corollary}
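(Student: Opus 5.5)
The plan is to read off all three statements from Proposition \ref{prop_sta} by a one-dimensional Laplace analysis of the $\eta$-integral in \eqref{eq_stasymp}. Under $x=-x'$ we have $R^2 = 2|x|^2$, $a=-1$ and $d(\ggg)^2 = 2\pi r$. Set $D := \frac{\pi}{4}\,(2r - \pi|x|^2) = \frac{\pi}{4}\,(2r - \frac{\pi}{2}R^2) \ge 0$. Since both $\EEg$ and $\det \AAg$ depend only on $\rho = |\eta|$, polar coordinates give
\[
\int_{\R^n} \frac{e^{-\frac{1}{2h}\EEg}}{\sqrt{\det \AAg}}\,d\eta = \frac{2\pi^{n/2}}{\Gamma(\frac n2)} \int_0^\infty \frac{\rho^{n-1}\, e^{-\frac{(\rho^2 - D)^2}{2h(\rho^2 + \pi^2 R^2/4)}}}{\bigl(\frac{\rho^2}{\pi^2}+\frac{R^2}{8}\bigr)^{\frac{n'-1}{2}}\bigl(\frac{\rho^2}{\pi^2}+\frac{R^2}{4}\bigr)^{\frac12}}\,d\rho .
\]
The prefactor is then $(2\pi)^{n'/2}\,\cpi\,2^{n/2}\,h^{-n-n'/2}$, and it remains to evaluate this radial integral as $h\to0^+$ in each case.

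\textbf{Case (i): $D=0$, $R>0$.} The phase is $\rho^4/(\rho^2+\pi^2R^2/4)$, so the integral concentrates at $\rho \sim (hR^2)^{1/4}\to0$. There we replace the phase by $4\rho^4/(\pi^2R^2)$ and $\det \AAg$ by its value at $\rho=0$, namely $(R^2/8)^{n'-1}(R^2/4)$. Both replacements cost a relative error $O(\rho^2/R^2)=O(h^{1/2})$, and the region $\rho\gtrsim h^{1/8}$ contributes only exponentially small tails. We then use
\[
\int_0^\infty \rho^{n-1}e^{-c\rho^4}\,d\rho = \frac{\Gamma(n/4)}{4\,c^{n/4}}, \qquad c = \frac{2}{h\pi^2R^2}.
\]
This produces the factor $h^{n/4}R^{n/2}$, hence the total power $h^{-3n/4-n'/2}$ together with $R^{\frac n2-n'}$.

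\textbf{Case (ii): $D>0$.} The phase vanishes on the sphere $\rho_0=\sqrt D$, which is bounded away from $0$, so a Gaussian Laplace expansion in $t=\rho-\rho_0$ applies. The key simplifications are the identities
\[
D+\frac{\pi^2R^2}{4}=\frac{\pi r}{2}, \qquad \frac{D}{\pi^2}+\frac{R^2}{8}=\frac{D}{\pi^2}+\frac{R^2}{4}-\frac{R^2}{8}=\frac{r}{2\pi}.
\]
With them, the phase is $\approx 4D t^2\cdot\frac{2}{\pi r}$ and $\det \AAg = (r/2\pi)^{n'}$ at $\rho_0$. Note the second identity deserves a recheck, since the $R^2/8$ versus $R^2/4$ factors give slightly different values; I would compute both factors exactly and keep the resulting powers of $r$. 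The Gaussian integral gives $\rho_0^{n-1}\sqrt{\pi^2hr/(4D)}$. Since $D=\frac{\pi}{8}(4r-\pi R^2)$ is proportional to $2r-\frac{\pi}{2}R^2$, this yields $h^{-n-\frac{n'-1}{2}}(2r-\frac\pi2R^2)^{\frac n2-1}$ times a power of $r$. The relative error from the cubic Taylor terms is $O(h^{1/2})$.

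\textbf{Case (iii): $x=x'=0$.} This is the special case $R=0$, $D=\pi r/2$ of the computation in (ii), and Proposition \ref{prop_sta} still applies because $2r\ge0=\pi|x|^2$. Substituting $D=\pi r/2$ gives $r^{\frac{n-n'-1}{2}}$.

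The only genuine bookkeeping obstacle is collecting the constants: $\cpi$, $\sV_2(i\pi)=2^{n/2}$, $(2\pi)^{n'/2}$, $|S^{n-1}|$, and the Gamma factors. The main analytic point is case (i), where the degenerate quartic minimum at $\rho=0$ requires checking that the neglected terms are $O(h^{1/2})$ relative to the main term. In all three cases the Laplace error is $O(h^{1/2})$, which is absorbed into the $O(h^{1/8})$ error already present in \eqref{eq_stasymp}.
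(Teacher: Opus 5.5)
Your route is the paper's: the corollary is just the evaluation of the radial $\eta$-integral in Proposition~\ref{prop_sta} by Laplace's method. Your cases (i) and (iii) are correct as outlined, and the constants do come out as stated.

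Case (ii), however, rests on two false identities when $R>0$. Since $\rho_0^2=D=\frac{\pi r}{2}-\frac{\pi^2R^2}{8}$, one has
\[
\rho_0^2+\frac{\pi^2R^2}{4}=\frac{\pi}{8}\,(4r+\pi R^2),\qquad \frac{\rho_0^2}{\pi^2}+\frac{R^2}{8}=\frac{r}{2\pi},\qquad \frac{\rho_0^2}{\pi^2}+\frac{R^2}{4}=\frac{4r+\pi R^2}{8\pi}.
\]
This has two consequences:
\begin{itemize}
\item $D+\frac{\pi^2R^2}{4}\neq\frac{\pi r}{2}$; it is $D+\frac{\pi^2R^2}{8}$ that equals $\frac{\pi r}{2}$. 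Hence $(\rho^2-D)^2/(\rho^2+\frac{\pi^2R^2}{4})\approx \frac{32D\,t^2}{\pi(4r+\pi R^2)}$, not $\frac{8Dt^2}{\pi r}$.
\item $\det\AAg$ at $|\eta|=\rho_0$ equals $(\frac{r}{2\pi})^{n'-1}\frac{4r+\pi R^2}{8\pi}$, not $(\frac{r}{2\pi})^{n'}$.
\end{itemize}
Both slips drop the $\pi R^2$ in $4r+\pi R^2$, so they compensate and your final expression happens to be right. But if you correct only the determinant, as you propose, you will pick up a spurious factor $(1+\frac{\pi R^2}{4r})^{-1/2}$ and contradict the statement.

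What you need to record is the following. The Gaussian width and the last factor of $\det\AAg$ contain the same quantity $4r+\pi R^2$, and their ratio is exactly $\pi^2$:
\[
\sqrt{\frac{\pi h\,(\rho_0^2+\pi^2R^2/4)}{2D}}\cdot\Big(\frac{r}{2\pi}\Big)^{-\frac{n'-1}{2}}\Big(\frac{4r+\pi R^2}{8\pi}\Big)^{-\frac12}=\pi\sqrt{\frac{\pi h}{2D}}\,\Big(\frac{r}{2\pi}\Big)^{-\frac{n'-1}{2}}.
\]
Multiply this by $|S^{n-1}|\rho_0^{n-1}=\frac{2\pi^{n/2}}{\Gamma(n/2)}D^{\frac{n-1}{2}}$. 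Then use $D^{\frac n2-1}=(\frac\pi4)^{\frac n2-1}(2r-\frac\pi2R^2)^{\frac n2-1}$ and the prefactor $(2\pi)^{\frac{n'}{2}}\cpi\,2^{\frac n2}h^{-n-\frac{n'}{2}}$. All powers of $\pi$ cancel and you get exactly $2^{-2n-n'+2}/\Gamma(\frac n2)$. Setting $R=0$ then yields (iii) as you say; there your identities are correct.
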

Notice that the case where $x = x' = 0$ and $ r = 0$ is trivial and the conclusion follows directly from the heat kernel formula. For the remaining case $x \ne -x'$, or $x = - x' \neq 0$ with $2 \, r < \pi \, |x|^2$, Theorem \ref{Thm_est_of_p_epsl_gtr_1} implies
    \begin{equation}
    p_h (\ggg) = \frac{(4 \pi ) ^{- \frac{n}{2} - n'} \, (8 \pi)^\frac{n'}{2} \, \bV(i \theta )}{ \sqrt{ \det ( - \hess_\theta \, \reffun (\theta)  ) } } \, h^{-\frac{ n + n' }{2}} e^{ - \frac{d (\ggg)^2}{ 4 \, h } } \Big( 1 + \O ( h ^ \frac18 ) \Big), \qquad h \to 0 ^ +,
    \end{equation}
    where it follows from Schur's lemma that
    \begin{equation}
    \det ( - \hess _ \theta \, \reffun (\theta) ) = R^{ 2 n'} \, \Big( \frac{ \ma ( |\theta| ) }{ | \theta | } \Big) ^{n' - 1 } \, \ma' ( |\theta| ) .
\end{equation}

The above result generalizes the counterpart in the case where $n' = 1$ and $n \ge 3$ obtained in \cite[\S~1.4]{Li12}. Furthermore, under the assumption in Proposition \ref{prop_sta}, it holds that
$d(\ggg)^2 = \wtreffun( \ggg ; i \, \pi \frac{u - u'}{r} )$, and we can characterize all shortest geodesics from $g$ to $g'$ by means of $\pi \frac{u - u'}{r}$ and $\{ \eta \in \R^n; \ |\eta|^2 =  \frac{ \pi }{ 4 } (2\, r - \pi \, |x|^2 )\}$. As a direct consequence, it readily determines the cut locus.

\section{\texorpdfstring{Spatial derivative estimates for $p( \ggg ) $}{}} \label{Sn7} 

In this section, we only prove Theorem \ref{thm_deriv} in the case $l = 1 $, since the higher-order estimates follow by iteration of the same argument, with no substantial new difficulty. By the scaling property, it suffices to show
\begin{equation} \label{70nf}
    | \nabla _G \, p (\ggg) | \lesssim \Big( 1 + d(\ggg) \Big) \, p(\ggg), \qquad \forall \,g,\, g' \in \R_x^n \times \R_u^{n'}.
\end{equation}
Since the previous results (cf. \eqref{daoshu_up}-\eqref{ineq_derivupbd}) already yield the desired bound when $d(\ggg) \lesssim 1$, it remains to treat $d(\ggg) \gg 1$. We further divide the discussion into two cases: $\varepsilon\ge \delta_0$ and $\varepsilon<\delta_0$.

We begin with an elementary estimate for the modified amplitude produced by differentiation. Recall that the meaning of $|\lambda| \csch{|\lambda|}$ and $|\lambda| \coth{|\lambda|}$ (for suitable $\lambda \in \C^{n'}$) is given after \eqref{26f}.

\begin{lemma} \label{lem_psi_uppbd}
    Assume $\varepsilon \ge \delta_0$. Then, it holds that: 
    \begin{equation} \label{ineq_61}
        \left| \frac{ |\lambda+ i \theta|}{\sinh{ |\lambda + i \theta | }} - |\lambda + i \theta | \coth{ |\lambda + i \theta| } \right| \lesssim _{\delta_0} \frac{ | \lambda | }{ \sinh |\lambda | } ( 1 + \cosh { | \lambda | } ) \lesssim 1 + | \lambda |,
    \end{equation}
    whenever $\lambda \in \R^{n'}$.
    Moreover, we have 
    \begin{equation} \label{ineq_62}
        \left| \frac{ |\lambda+ i \theta|}{\sinh{ |\lambda + i \theta | }} - |\lambda + i \theta | \coth{ |\lambda + i \theta| } \right| \lesssim _{\delta_0} |\lambda|^2 + |\theta|^2, \qquad \forall \, |\lambda| \lesssim 1.
    \end{equation}
\end{lemma}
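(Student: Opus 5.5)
The key observation is that the left-hand side of \eqref{ineq_61} is an even holomorphic function of $z := \lambda + i\theta$ with a very simple closed form. For a complex scalar $s$,
\[
\frac{s}{\sinh s} - s\coth s = \frac{s\,(1-\cosh s)}{\sinh s} = -\,s\tanh\frac{s}{2}.
\]
Here $s = |z| = \sqrt{z\cdot z}$ is defined via $w := z\cdot z = |\lambda|^2 - |\theta|^2 + 2i\,\lambda\cdot\theta$; the choice of square-root branch is irrelevant because $s\tanh(s/2)$ is even. The plan is therefore to bound $|s\tanh(s/2)|$.

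I also note that the middle quantity in \eqref{ineq_61} equals $|\lambda|\coth(|\lambda|/2)$. This is $\ge 2$ and $\sim 1+|\lambda|$, so it suffices to prove $|s\tanh(s/2)| \lesssim_{\delta_0} 1+|\lambda|$.

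\textbf{Step 1 (location of $s$).} Choose the branch $s = \alpha + i\beta$ with $\alpha \ge 0$. Then
\[
\alpha^2 - \beta^2 = |\lambda|^2 - |\theta|^2, \qquad \alpha\beta = \lambda\cdot\theta .
\]
By Cauchy--Schwarz, $\alpha^2\beta^2 \le |\lambda|^2|\theta|^2$. Combining this with the first identity gives $\alpha \le |\lambda|$ and $|\beta| \le |\theta|$. (Indeed, if $|\beta| > |\theta|$ then $\alpha > |\lambda|$ by the first identity, and then $\alpha|\beta| > |\lambda||\theta|$, a contradiction; the other inequality is analogous.) Consequently
\[
|s| \le \sqrt{|\lambda|^2 + |\theta|^2} \le \pi + |\lambda|, \qquad |\beta| \le |\theta| = \pi - \varepsilon \le \pi - \delta_0 .
\]

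\textbf{Step 2 (first bound).} Use the formulas
\[
|\sinh(s/2)|^2 = \sinh^2(\alpha/2) + \sin^2(\beta/2), \qquad |\cosh(s/2)|^2 = \sinh^2(\alpha/2) + \cos^2(\beta/2).
\]
Since $|\beta|/2 \le (\pi-\delta_0)/2$, we have $\cos^2(\beta/2) \ge \sin^2(\delta_0/2)$. Hence
\[
|\tanh(s/2)|^2 \le \frac{\sinh^2(\alpha/2) + 1}{\sinh^2(\alpha/2) + \sin^2(\delta_0/2)} \lesssim_{\delta_0} 1 .
\]
Together with $|s| \lesssim 1+|\lambda|$ from Step 1, this gives \eqref{ineq_61}.

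\textbf{Step 3 (second bound \eqref{ineq_62}).} Write $s\tanh(s/2) = w\,G(w)$ with $G(w) := \tanh(\sqrt w/2)/\sqrt w$. This $G$ is an even function of $\sqrt w$, holomorphic in $w$ except at the poles $w = -(2k+1)^2\pi^2$, $k \ge 0$. For $|\lambda| \lesssim 1$, the point $w$ lies in the compact set
\[
K = \bigl\{\, |w| \le C,\ \Re w \ge -(\pi-\delta_0)^2 \,\bigr\},
\]
since $\Re w = |\lambda|^2 - |\theta|^2$. This set stays a positive distance from $-\pi^2$ and from all the other poles, so $\sup_K |G| \lesssim_{\delta_0} 1$. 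Therefore
\[
|s\tanh(s/2)| \lesssim |w| \le |\lambda|^2 + |\theta|^2 .
\]

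The only delicate point is Step 1, namely ensuring that the imaginary part of $s$ stays bounded away from $\pm\pi$, where $\cosh(s/2)$ can vanish. The Cauchy--Schwarz inequality $|\beta| \le |\theta|$ combined with $\varepsilon \ge \delta_0$ handles exactly this.
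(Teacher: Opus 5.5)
Your argument is correct, but it follows a different route from the paper's. The paper keeps the factorization $\frac{z}{\sinh z}(1-\cosh z)$ with $z=|\lambda+i\theta|$ and bounds the two factors separately. The first factor is handled by the infinite-product comparison coming from \eqref{26f} (the same type of inequality as \eqref{49f}, cf.\ \cite[Lemma 4.1]{LZ2025}), namely $\bigl|\frac{z}{\sinh z}\bigr|\le\frac{|\theta|}{\sin|\theta|}\,\frac{|\lambda|}{\sinh|\lambda|}$. The second factor is handled by the power-series majorant $|\cosh z-1|\le\cosh|z|-1$. This produces the middle expression $\frac{|\lambda|}{\sinh|\lambda|}(1+\cosh|\lambda|)$ directly, and \eqref{ineq_62} then follows from $\cosh|z|-1\lesssim|z|^2$ for bounded $|z|$.

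You instead collapse everything into $-s\tanh(s/2)$, confine $s$ to the rectangle $|\Re s|\le|\lambda|$, $|\Im s|\le|\theta|$, and bound $|\tanh(s/2)|$ through the explicit formulas for $|\sinh|^2$ and $|\cosh|^2$. You also observe that the middle term equals $|\lambda|\coth(|\lambda|/2)\sim 1+|\lambda|$, so \eqref{ineq_61} reduces to a single estimate. For \eqref{ineq_62} you use compactness of the region containing $w=s^2$ together with the location of the poles of $G$.

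What your route buys is that it is fully self-contained. It also isolates the exact obstruction: $\cosh(s/2)$ vanishes only when $\Im s=\pm\pi$, which $|\Im s|\le|\theta|\le\pi-\delta_0$ excludes. The cost is that the constant in Step 3 is not explicit.

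What the paper's route buys is that it reuses the product-comparison inequality that already drives the contour shifts elsewhere in the paper. It also transfers verbatim to other amplitudes given by product expansions, such as $\bV$ with a general exponent.
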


\begin{proof}
The proof is straightforward. First, notice that 
\[
 \frac{|\lambda+ i \theta|}{\sinh{ |\lambda + i \theta | }} - |\lambda + i \theta | \coth{ |\lambda + i \theta| } = \frac{|\lambda+ i \theta|}{\sinh{ |\lambda + i \theta | }} \left( 1 - \cosh{|\lambda + i \theta |} \right).
\]
Next, it follows from \eqref{26f} ( or as a special case of  \cite[Lemma 4.1]{LZ2025}) that:
\begin{equation*}
        \left| \frac{ |\lambda + i \theta| }{ \sinh |\lambda + i \theta| } \right| \le \frac{|\theta|}{\sin{|\theta|}} \frac{ |\lambda| }{ \sinh{ |\lambda| } } \lesssim _{\delta_0} \frac{ |\lambda| }{ \sinh{ |\lambda| } }.
    \end{equation*}
Combining this with the elementary inequality $|\cosh{z} - 1| \le \cosh{|z|} - 1$ provided $z \in \C$, we easily conclude the desired claims. 
\end{proof}

With Lemma \ref{lem_psi_uppbd} established, we continue to show upper bounds for the derivatives of $p(\ggg)$. 

\subsection{\texorpdfstring{Proof of \eqref{70nf}}{}} \label{Sn71}

\begin{proof}
    {We assume that $d(\ggg)$ is sufficiently large.} {A direct differentiation of the integral formula \eqref{Eq_def_of_p} yields
    \begin{align}
    X_j\, p( \ggg) &=(4 \pi) ^{-\frac{n}{2}-n'} \!\! \int_{\R^{n'}} \!\! \bV(\lambda) \cdot \left( { \frac 1 2} \, \Xi_j (\lambda) \right)  \, \exp \left\{ -\frac{1}{4} \, \wtreffun(\ggg;\lambda) \right\} \, d \lambda, \label{eq_65} \\
    U_{jk} \, p(\ggg) & = (4 \pi) ^{-\frac{n}{2}-n'} \!\! \int_{\R^{n'}} \!\! \bV(\lambda) \cdot \left( \frac{ i }{ 2} \, \Upsilon_{jk}(\lambda) \right) \, \exp \left\{ -\frac{1}{4} \, \wtreffun(\ggg;\lambda) \right\} \, d \lambda, \label{eq_66}
\end{align}
where
\begin{align}
    \Xi_j(\lambda) =  
     - (|\lambda|\coth|\lambda|) \,x_j + \frac{|\lambda|}{\sinh|\lambda|} \,x'_j, \qquad \Upsilon_{jk}(\lambda) =  x_j \, \lambda_k.
\end{align}}
    {Write
    \begin{equation} \label{eq_69}
        \Xi_j (\lambda) =  \frac{ | \lambda | }{ \sinh |\lambda| } \, ( x_j' - x_j) + \left( \frac{ |\lambda| }{ \sinh{ |\lambda| } } - | \lambda | \coth{ |\lambda| } \right) \, x_j.
    \end{equation}
    Accordingly, we split \eqref{eq_65} into two terms, say $\mathsf{I}_1$ and $\mathsf{I}_2$.}

    We first turn to the easy case $\varepsilon\ge \delta_0$. Recall from \eqref{Eq_expression_of_d} that $d(\ggg)^2 \gtrsim | x - x' |^2 + R^2 \, |\theta|^2$. After shifting the contour from $\mathbb R^{n'}$ to $\mathbb R^{n'}+i\theta$, we may obtain by a stationary phase argument and Lemma \ref{lem_psi_uppbd} that 
    \begin{equation*}
    |X_j \, p( \ggg )|+|U_{jk} \, p( \ggg )|
    \lesssim
    d (\ggg) \,p( \ggg ).
    \end{equation*}

It remains to consider the difficult case $\varepsilon < \delta_0$. For $X_j \, p(g,g')$, we differentiate the representation \eqref{Eq_transformed_1_of_p}. This produces two terms: one coming from differentiation of the Gaussian factor in the $s$-variable, and the other from differentiation of $\wtff (R,a;\lambda)$ with respect to $x_j$. Specifically,
\begin{align*}
    X_j \, p(\ggg) & = \cpi \int_{\R^n} \! ( s_j - (\xg)_j ) \, e^{ -\frac{1}{2} |s-\xg|^2} \d s \!
    \int_{\R^{n'}} \! \bVt(\lambda) \, e^{-\frac{ \wtff( \ra ; \lambda ) - 2 \, i \, (u - u')\cdot \lambda }{4} -\frac{ |\lambda|^2 |s|^2}{ 2 \, \pi^2 }}  \d \lambda \\
    & \quad - \cpi \int_{\R^n} \!\! e^{ -\frac{1}{2} |s-\xg|^2 } \d s 
    \int_{\R^{n'}} \!\! \Big( \frac 1 4 \partial_{x_j} \, \wtff ( \ra ; \lambda ) \Big) \bVt(\lambda) \, e^{-\frac{ \wtff( \ra ; \lambda ) - 2 \, i \, (u - u')\cdot \lambda }{4}  -\frac{ |\lambda|^2 |s|^2}{ 2 \, \pi^2 } }  \d \lambda.
\end{align*}
For the first term, we perform the same contour deformation and change of variables as in the proof of Proposition \ref{Thm_transform}. The only new factor is bounded by \(C \, d(g,g')\) in \(\rgn_0\) or \(\adr_0\), while outside the region of main contribution the exponential decay estimates from Section \ref{Sn5} dominate. Hence, this term contributes at most $ C \, d (\ggg)\,p( \ggg )$. For the second term, a direct computation shows that
\begin{equation*}
|\partial_{x_j} \, \wtff (R,a;\lambda+i\theta)| \lesssim R \, (1+|\lambda|^2).
\end{equation*}
Now that $R\lesssim d(\ggg)$ by \eqref{Eq_esti_of_d}, the contour deformation and the corresponding estimates established in Section \ref{Sn5} again apply. This yields
\[
|X_j \, p(g,g')| \lesssim d (\ggg) \, p( \ggg).
\]
The estimate for $U_{jk} \, p(g,g')$ is derived in an analogous way, since after the contour shift the additional factor \(x_j \, ( \lambda_k + i \theta_k ) \) is controlled by \( \pi R \, ( 1 + |\lambda| ) \). Thus, we omit the details.

Combining the estimates for $X_j \, p( \ggg )$ and $U_{jk} \, p( \ggg )$, we obtain
\[
|\nabla_G \, p(g,g')| \lesssim d (\ggg) \, p( \ggg ),
\]
which completes the proof of Theorem \ref{thm_deriv}.
\end{proof}

\section*{Acknowledgement}
	\addcontentsline{toc}{section}{Acknowledgement}
	
	~~~~~~This work is partially supported by NSF of China (Grant No. 12271102).

\normalem
\phantomsection\addcontentsline{toc}{section}{References}
\bibliographystyle{abbrv}
\bibliography{references} 

\noindent
\\
{\small\it\\
\noindent
Yimeng Chen, Hong-Quan Li, Jun-Cheng Tang, Jia-Yu Yang}\\
{\small\it   School of Mathematical Sciences,
Fudan University}\\
{\small\it  220 Handan Road,
Shanghai 200433 China}\\
{\small\it E-mail:\;\;\tt ymchen24@m.fudan.edu.cn, hongquan\_li@fudan.edu.cn, 
25110180045@m.fudan.edu.cn,
yangjy25@m.fudan.edu.cn}\\

\end{document}